\newtheorem{theorem}{Theorem}
\newtheorem{lemma}[theorem]{Lemma}
\newtheorem{proposition}[theorem]{Proposition}
\newtheorem{corollary}[theorem]{Corollary}
\newcommand{\<}{\langle}
\renewcommand{\>}{\rangle}
\newcommand{\E}{\mathbb{E}}
\newcommand{\R}{\mathbb{R}}
\newcommand{\V}{\mathcal{V}}
\newcommand{\e}{\text{e}}
\newcommand{\x}{\otimes}
\title{The Heston stochastic volatility model in Hilbert space}
\author{Fred Espen Benth and Iben Cathrine Simonsen}
\address{Fred Espen Benth and Iben Cathrine Simonsen \\
Department of Mathematics \\
University of Oslo\\
P.O. Box 1053, Blindern\\
N--0316 Oslo, Norway}
\email[]{[fredb,ibens]\@@math.uio.no}
\keywords{Heston stochastic volatility, Infinite dimensional Ornstein-Uhlenbeck processes,
forward prices, commodity markets}
\date{\today}
\thanks{F. E. Benth acknowledges financial support from the project FINEWSTOCH, funded by the Norwegian Research Council.}
\begin{document}
\begin{abstract}
 We extend the Heston stochastic volatility model to a Hilbert space framework. 
 The tensor Heston stochastic variance process is defined as a tensor product of a Hilbert-valued Ornstein-Uhlenbeck process with itself.
 The volatility process is then defined by a Cholesky decomposition of the variance process.
 We define a Hilbert-valued Ornstein-Uhlenbeck process with Wiener noise perturbed by this stochastic volatility,
 and compute the characteristic functional and covariance operator of this process.
 This process is then applied to the modelling of forward curves in energy markets.
 Finally, we compute the dynamics of the tensor Heston volatility model when the generator is bounded, and study its projection down to the real line for 
comparison with the classical Heston dynamics.
\end{abstract}
\maketitle
\section{Introduction}

Ornstein-Uhlenbeck processes in Hilbert space has received some attention in the literature in recent years (see Applebaum~\cite{Apple}), one reason being that it is
a basic process for the dynamics of commodity forward prices (see Benth and Kr\"uhner~\cite{BK-HJM}). In the modelling of financial prices, the stochastic volatility 
dynamics plays an important role, and in this paper we propose an infinite dimensional version of the classical Heston model (see Heston~\cite{H}). 

On a separable Hilbert space $H$, an Ornstein-Uhlenbeck process $X(t)$ takes the form
$$
dX(t)=\mathcal C X(t)\,dt+\sigma\,dB(t),
$$
where $\mathcal C$ is some densely defined linear operator and $B$ is an $H$-valued Wiener process. Usually, $\sigma$ is some non-random bounded linear
operator on $H$, being a scaling of the noise which is referred to as the volatility. We propose to model $\sigma$ as a time-dependent stochastic process
with values in the space of bounded linear operators. More specifically, we consider a stochastic variance process $\mathcal V(t)$ being defined as the tensor product of
another Ornstein-Uhlenbeck process with itself, which will become a positive definite stochastic process in the space of Hilbert-Schmidt operators on $H$. We 
use its square root process as a volatility process $\sigma$ in the dynamics of $X$. Our construction is an extension of the classical Heston stochastic volatility model.   

If $H$ is some suitable space of real-valued functions 
on $\mathbb R_+$, the non-negative real numbers, and $\mathcal C=\partial/\partial x$, one can view $X(t,x)$ as the risk-neutral forward price at time $t\geq 0$ for some contract delivering a given commodity at time $t+x$. Such forward price models (with generalisations) have been extensively analysed in 
Benth and Kr\"uhner~\cite{BK-HJM}, being
stochastic models in the so-called Heath-Jarrow-Morton framework (see Heath, Jarrow and Morton~\cite{HJM}) with the Musiela parametrisation. The analysis
relates closely to a long stream of literature on forward rate modelling in fixed-income markets (see Filipovic~\cite{filipovic} and references therein). However, 
stochastic volatility models from the infinite dimensional perspective have not, to the best of our knowledge, been studied to any significant extent. An exception is the
paper by Benth, R\"udiger and S\"uss \cite{BRS}, who propose and analyse an infinite dimensional generalisation of the Barndorff-Nielsen and Shephard stochastic
volatility model (see Barndorff-Nielsen and Shephard~\cite{BNS}).   

%Another area where our analysis may be relevant is turbulence and ambit fields. The class of random fields known as {\it ambit fields} was introduced by
%Barndorff-Nielsen and Schmiegel~\cite{BN-schm} as a stochastic modelling approach for turbulent phenomena. Ambit fields are specified as stochastic integrals
%with respect to L\'evy bases, where the integrands are products between a deterministic "memory" function and a stochastic intermittency. The integral
%specification is a stochastic convolution product, and can be viewed as an extension of infinite dimensional Ornstein-Uhlenbeck processes with stochastic volatility.
%This volatility is coined intermittency in the context of turbulence. Our proposed infinite dimensional Heston stochastic volatility dynamics will be a completely
%new class of intermittency models for ambit fields.

As indicated, we define $\mathcal V(t)=Y(t)^{\otimes 2}$, where $Y$ is an $H$-valued Gaussian Ornstein-Uhlenbeck process. We prove several properties of 
the tensor Heston variance process $\mathcal V$, and show that the square-root process $\mathcal V^{1/2}$ is explicitly available. Moreover, we present
a family of Cholesky-type decompositions of $\mathcal V$, which will be our choice as stochastic volatility in the dynamics of $X$. We study 
probabilistic properties of both $\mathcal V$ and $X$, and specialize to the situation of a commodity forward market where we provide expressions for the implied 
covariance structure between forward prices with different times to maturity. In the situation when the Ornstein-Uhlenbeck process $Y$ is governed by a bounded 
generator, we can present a stochastic dynamics of $\mathcal V$ which can be related to the Heston model in the finite dimensional case. In particular, our model
is an alternative to the Wishart process of Bru~\cite{Bru}.

\subsection{Notation} We let $(\Omega, \mathcal{F}, \{\mathcal{F}_t\}_{t\geq 0}, \mathbb{P})$ be a filtered probability space and $H$ be a separable Hilbert space with inner product $\langle\cdot,\cdot\rangle$ and associated norm $|\,\cdot\,|$.
Furthermore, we let $L(H)$ denote the space of bounded linear operators from $H$ into itself, which is a Banach space with the operator
norm denoted $\|\cdot\|_{\text{op}}$. The adjoint of an operator $A\in L(H)$ is denoted
$A^*$. Furthermore, $\mathcal{H}=L_{HS}(H)$ denotes the space of Hilbert-Schmidt operators in $L(H)$. 
$\mathcal{H}$ is also a separable Hilbert space, and we denote its inner product by $\langle\langle\cdot,\cdot\rangle\rangle$ and the associated norm by $\|\,\cdot\,\|$. 

\section{The tensor Heston stochastic variance process}
Let $\{W(t)\}_{t\geq 0}$ be an $\mathcal F_t$-Wiener process in $H$ with covariance operator $Q_W\in L(H)$, where $Q_W$ is a symmetric and positive definite trace class operator.
Define the Ornstein-Uhlenbeck process $\{Y(t)\}_{t\geq 0}$ in $H$ by
\begin{equation} \label{Y}
 dY(t) = \mathcal{A} Y(t)\, dt + \eta\, dW(t), \mspace{40mu} Y(0)=Y_0 \in H,
\end{equation}
where $\mathcal{A}$ is a densely defined operator on $H$ generating a $C_0$-semigroup $\{\mathcal{U}(t)\}_{t\geq 0}$, and $\eta\in L(H)$.
From Peszat and Zabczyk~\cite[Sect.~9.4]{PZ}, the unique mild solution of \eqref{Y} is given by
\begin{equation}\label{eq:tensor-heston-Y-process-mild}
 Y(t) = \mathcal{U}(t) Y_0 + \int_0^t \mathcal{U}(t-s)\eta\, dW(s),
\end{equation}
for $t\geq 0$.
The next lemma gives the characteristic functional of $Y(t)$.
\begin{lemma}
\label{lemma:gaussian-Y}
 For $f\in H$ we have
 \[
  \mathbb{E}\Big[\exp \left(\mathrm{i}\langle Y(t),f\rangle\right)\Big] = \exp \left(\mathrm{i}\langle\mathcal{U}(t)Y_0,f\rangle
-\frac{1}{2}\langle\int_0^t\mathcal U(s)\eta Q_W\eta^* \mathcal{U}^*(s)\,ds f,f\rangle\right),
 \]
where the integral on the right-hand side is the Bochner integral on $L(H)$. 
\end{lemma}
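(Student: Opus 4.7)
The plan is to decompose $Y(t)$ via the mild solution formula \eqref{eq:tensor-heston-Y-process-mild}, isolate the deterministic part, and identify the stochastic part as a real-valued Gaussian random variable whose variance I can compute explicitly.

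First, I would pair the mild solution with $f$ to write $\langle Y(t),f\rangle = \langle \mathcal{U}(t)Y_0,f\rangle + Z(t,f)$, where $Z(t,f):=\langle \int_0^t \mathcal{U}(t-s)\eta\,dW(s),f\rangle$. The deterministic summand pulls out of the expectation as the factor $\exp(\mathrm{i}\langle \mathcal{U}(t)Y_0,f\rangle)$, so the problem reduces to computing $\mathbb{E}[\exp(\mathrm{i}Z(t,f))]$.

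Next, I would use that a bounded linear functional commutes with the $H$-valued stochastic integral to rewrite $Z(t,f) = \int_0^t \langle \eta^*\mathcal{U}^*(t-s)f,dW(s)\rangle$. This is a Wiener integral with deterministic integrand $s\mapsto \eta^*\mathcal{U}^*(t-s)f\in H$, hence a centred real Gaussian. Its variance is computed via the It\^o isometry for $H$-valued Wiener integrals (as in Peszat and Zabczyk \cite{PZ}):
\[
\mathrm{Var}(Z(t,f)) = \int_0^t \langle Q_W\eta^*\mathcal{U}^*(t-s)f,\eta^*\mathcal{U}^*(t-s)f\rangle\,ds = \int_0^t \langle \mathcal{U}(t-s)\eta Q_W\eta^*\mathcal{U}^*(t-s)f,f\rangle\,ds.
\]
After the substitution $u=t-s$ and using that the linear functional $T\mapsto \langle Tf,f\rangle$ on $L(H)$ commutes with Bochner integration, this becomes $\langle \int_0^t\mathcal{U}(u)\eta Q_W\eta^*\mathcal{U}^*(u)\,du\,f,f\rangle$. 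Substituting the Gaussian characteristic function $\mathbb{E}[\exp(\mathrm{i}Z)] = \exp(-\mathrm{Var}(Z)/2)$ and multiplying by the deterministic factor gives the claimed identity.

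The main things requiring care are verifying that the Bochner integral $\int_0^t \mathcal{U}(s)\eta Q_W\eta^*\mathcal{U}^*(s)\,ds$ is well defined in $L(H)$ (which follows since $\mathcal{U}$ is strongly continuous, $\eta$ bounded and $Q_W$ trace class, so the integrand is continuous in the trace-class, hence operator, norm on the compact interval $[0,t]$), and justifying the interchange of the bounded functional $\langle\cdot f,f\rangle$ with the integral. The Gaussianity of the stochastic integral with deterministic operator-valued integrand is a standard consequence of finite-dimensional approximation and $L^2$-convergence, so I would cite \cite{PZ} rather than prove it from scratch.
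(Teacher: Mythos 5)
Your proposal is correct and follows essentially the same route as the paper: pair the mild solution with $f$, move the functional inside the stochastic convolution to get a centred real Gaussian, compute its variance (the paper reads it off from Gaussianity and independent increments, you invoke the It\^o isometry, which amounts to the same thing), substitute $u=t-s$, and check that the $L(H)$-valued Bochner integral is well defined. The only cosmetic difference is the justification of that last point (the paper uses the exponential growth bound for the semigroup rather than continuity of the integrand), which does not change the substance.
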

\begin{proof}
From the mild solution of  $\{Y(t)\}_{t\geq 0}$ in \eqref{eq:tensor-heston-Y-process-mild}, we find
\begin{align*}
\langle Y(t),f\rangle&=\langle\mathcal U(t) Y_0,f\rangle+\langle\int_0^t\mathcal U(t-s)\eta\,dW(s),f\rangle \\
&=\langle\mathcal U(t)Y_0,f\rangle+\int_0^t\langle\eta^*\mathcal U^*(t-s) f,dW(s)\rangle.
\end{align*}
Hence, from the Gaussianity and independent increment property of the Wiener process, 
\begin{align*}
\E\left[\exp\left(\mathrm{i}\langle Y(t),f\rangle\right)\right]=\exp\left(\mathrm{i}\langle\mathcal U(t) Y_0,f\rangle-\frac12\int_0^t\langle
\mathcal U(t-s)\eta Q_W\eta^*\mathcal U^*(t-s)f,f\rangle\,ds\right).
\end{align*}
As $\{\mathcal U(t)\}_{t\geq 0}$ is a $C_0$-semigroup, its operator norm satisfies an exponential growth bound in time by the 
Hille-Yoshida Theorem (see Engel and Nagel~\cite[Prop.~I.5.5]{EN}). Hence, 
the Bochner integral $\int_0^t\mathcal U(s)\eta Q_W\eta^*\mathcal U^*(s)\,ds$ is well-defined, and the result follows.
%For $f\in H$, define the linear functional $\mathcal{L}_f$ on $H$ by $\mathcal{L}_f:=\<f,\cdot\,\>_H$. Note that since 
%$\<f,g\>_H=\mathcal{L}_f(g)=\mathcal{L}_f(g)1=\<g,\mathcal{L}^*_f(1)\>_H$ for any $g\in H$, it follows that $\mathcal{L}_f^*(1)=f$. 
%From Thm.~2.1 in Benth and Kr\"uhner~\cite{BK-HJM} we have
%$$
%\mathcal{L}_f\int_0^t\mathcal{U}(t-s)\eta\,dW(s)=\int_0^t\sigma_f(t-s)\,dw(s)\,,
%$$
%with $w(t)$ being a real-valued Wiener process and 
%\begin{align*}
%\sigma^2_f(t-s)&=\mathcal{L}_f\mathcal{U}(t-s)\eta\mathcal{Q}_W\eta^*\mathcal{U}(t-s)^*\mathcal{L}_f^*(1) \\
%&=\<f,\mathcal{U}(t-s)\eta\mathcal{Q}_W\eta^*\mathcal{U}(t-s)^*f\>_H \\
%&=|\mathcal{Q}_W^{1/2}\eta^*\mathcal{U}(t-s)^*f|_H^2\,.
%\end{align*}
%Hence, from \eqref{eq:tensor-heston-Y-process-mild},
%\begin{align*}
%\E\Big[\exp\left(\mathrm{i}\<Y(t),f\>_H\right)\Big]&=\exp\left(\mathrm{i}\<\mathcal{U}(t)Y_0,f\>_H\right)\E\left[\exp\left(\mathrm{i}\int_0^t\sigma_f(t-s)\,dw(s)\right)\right] \\
%&=\exp\left(\mathrm{i}\<\mathcal{U}(t)Y_0,f\>_H-\frac12\int_0^t\sigma_f^2(t-s)\,ds\right) \,,
%\end{align*}
%and the result follows.
\end{proof}
From the lemma above we conclude that $\{Y(t)\}_{t\geq 0}$ is an $H$-valued Gaussian process with mean $\mathcal{U}(t)Y_0$ and 
covariance operator
$$
Q_{Y(t)}=\int_0^t\mathcal{U}(s)\eta\mathcal{Q}_W\eta^*\mathcal{U}(s)^*\,ds.
$$
Following Applebaum~\cite{Apple}, $\{Y(t)\}_{t\geq 0}$ admits an invariant Gaussian distribution with zero mean if the $C_0$-semigroup 
$\{\mathcal U(t)\}_{t\geq 0}$ is exponentially stable. The covariance operator for the invariant mean zero Gaussian distribution of $\{Y(t)\}_{t\geq 0}$ then becomes
$$
Q_Y=\int_0^{\infty}\mathcal{U}(s)\eta\mathcal{Q}_W\eta^*\mathcal{U}(s)^*\,ds.
$$  

We define the \emph{tensor Heston stochastic variance process} $\{\V(t)\}_{t\geq 0}$ by
\begin{equation}
 \V(t):=Y(t)^{\x2},
\end{equation}
where we recall the tensor product to be $f\x g:=\<f,\cdot\,\>g$ for $f,g\in H$. By the Cauchy-Schwartz inequality, it follows straightforwardly that
$f\x g\in L(H)$. %, and $f^{\x2}=f\x f$.
Hence, the tensor Heston 
stochastic variance process $\{\mathcal V(t)\}_{t\geq 0}$ defines an $\mathcal F_t$-adapted stochastic process in $L(H)$. The next proposition
shows that $\{\mathcal V(t)\}_{t\geq 0}$ defines a family of symmetric, positive definite Hilbert-Schmidt operators.
\begin{proposition}
\label{prop:Vsymposdef}
 It holds that $\V(t)\in \mathcal{H}$ for all $t\geq 0$.  Furthermore, $\V(t)$ is a symmetric and positive definite operator. 
\end{proposition}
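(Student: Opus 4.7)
The plan is to unpack the definition $\mathcal{V}(t) = Y(t) \otimes Y(t) = \langle Y(t),\,\cdot\,\rangle Y(t)$ and verify each of the three properties by direct computation; nothing deep is required since $Y(t) \in H$ almost surely by the preceding lemma.

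First I would fix $\omega \in \Omega$ and $h \in H$, and simply write $\mathcal{V}(t)h = \langle Y(t),h\rangle\,Y(t)$, from which boundedness on $H$ is immediate via Cauchy--Schwarz with $\|\mathcal{V}(t)\|_{\mathrm{op}} \le |Y(t)|^2$. To upgrade from boundedness to the Hilbert--Schmidt class, I would pick an arbitrary orthonormal basis $\{e_n\}_{n\ge 1}$ of $H$ and compute
\[
 \sum_{n\ge 1} |\mathcal{V}(t)e_n|^2 \;=\; \sum_{n\ge 1} |\langle Y(t),e_n\rangle|^2\,|Y(t)|^2 \;=\; |Y(t)|^4,
\]
using Parseval in the last step. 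Since $Y(t) \in H$ pointwise in $\omega$, this shows $\mathcal{V}(t) \in \mathcal{H}$ with $\|\mathcal{V}(t)\| = |Y(t)|^2$.

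Symmetry is a one-line check: for any $h_1,h_2 \in H$,
\[
 \langle \mathcal{V}(t) h_1, h_2\rangle = \langle Y(t),h_1\rangle\,\langle Y(t),h_2\rangle,
\]
which is manifestly symmetric in $(h_1,h_2)$, hence $\mathcal{V}(t)^* = \mathcal{V}(t)$. Positivity is equally direct: taking $h_1 = h_2 = h$ gives $\langle \mathcal{V}(t)h,h\rangle = |\langle Y(t),h\rangle|^2 \ge 0$ for every $h \in H$, which is the required positive (semi)definiteness.

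Since every step is essentially a definition unfolding, there is no genuine obstacle; the only thing worth flagging is the terminology ``positive definite'', which as computed above really yields a positive semidefinite operator (strictly positive only on $h \not\perp Y(t)$). I would note this and interpret the proposition's ``positive definite'' in the standard operator-theoretic sense of nonnegativity, consistent with the subsequent use of a square root.
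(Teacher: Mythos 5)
Your proof is correct and follows essentially the same route as the paper: Parseval's identity giving $\|\mathcal{V}(t)\|^2=|Y(t)|^4$ for the Hilbert--Schmidt property, and the same one-line computations for symmetry and nonnegativity of $\langle\mathcal{V}(t)f,f\rangle$. Your remark that ``positive definite'' here really means positive semidefinite matches the paper's usage, so nothing further is needed.
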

\begin{proof}
Let $\{e_n\}_{n=1}^{\infty}$ be an orthonormal basis (ONB) of $H$. By Parseval's identity applied twice,
\begin{align*}
\|\mathcal{V}(t)\|^2&=\sum_{n=1}^{\infty}|\mathcal{V}(t)e_n|^2 =\sum_{n=1}^{\infty}|Y^{\otimes 2}(t)e_n|^2
%&=\sum_{n=1}^{\infty}\E\left[\<Y^{\otimes 2}(t)e_n,Y^{\otimes 2}(t)e_n\>\right] \\
%&=\sum_{n=1}^{\infty}\E\left[\<\<Y(t),e_n\> Y(t),\<Y(t),e_n\> Y(t)\>\right]  \\
=\sum_{n=1}^{\infty}|Y(t)|^2\<Y(t),e_n\>^2 =|Y(t)|^4.
\end{align*}
%By Fernique's Theorem (see Fernique~\cite{Fernique} or Thm. 3.31 in Peszat and Zabczyk~\cite{PZ}) the last expectation is finite. 
Since $Y(t)\in H$ for every $t\geq 0$, the first conclusion of the proposition follows. 

We find for $f,g\in H$ that 
$$
\<\mathcal{V}(t)f,g\>=\<\<Y(t),f\> Y(t),g\>=\<Y(t),f\> \<Y(t),g\> = \<f,\<Y(t),g\> Y(t)\>=\<f,\mathcal{V}(t)g\>.
$$
Moreover, with $f=g$,
$$
\<\mathcal{V}(t)f,f\>=\<Y(t),f\>^2\geq 0.
$$
This proves the second part.
\end{proof}
The proposition shows that $\|\mathcal V(t)\|=|Y(t)|^2$ for all $t\geq 0$. The Gaussianity of the process $\{Y(t)\}_{t\geq 0}$ implies that 
the real-valued stochastic process $\{\|\mathcal V(t)\|\}_{t\geq 0}$ has finite exponential moments up to a certain order:
\begin{lemma}
It holds that 
$$
\mathbb E[\exp(\theta\|\mathcal V(t)\|)]\leq\frac{\e^{2\theta|\mathcal U(t)Y_0|^2}}{\sqrt{1-4\theta k}}
$$
for $0\leq\theta\leq 1/4k$ and $k=\mathbb E[|\int_0^t\mathcal U(t-s)\eta\,dW(s)|^2]<\infty$.
\end{lemma}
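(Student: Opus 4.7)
The plan is to reduce the statement to a computation with a centered Gaussian random variable in $H$ and then apply a standard Karhunen--Loève decomposition. By Proposition~\ref{prop:Vsymposdef} we already have $\|\mathcal V(t)\|=|Y(t)|^2$, so the left-hand side is $\mathbb E[\exp(\theta|Y(t)|^2)]$. Writing $Y(t)=\mathcal U(t)Y_0+Z(t)$ with $Z(t)=\int_0^t\mathcal U(t-s)\eta\,dW(s)$, the vector $Z(t)$ is centered Gaussian in $H$ with covariance operator $Q_{Y(t)}$, and from the inequality $(a+b)^2\le 2a^2+2b^2$ I get $|Y(t)|^2\le 2|\mathcal U(t)Y_0|^2+2|Z(t)|^2$. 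Factoring out the deterministic part yields
$$
\mathbb E[\exp(\theta|Y(t)|^2)]\le \e^{2\theta|\mathcal U(t)Y_0|^2}\,\mathbb E[\exp(2\theta|Z(t)|^2)],
$$
which matches the prefactor in the claimed bound and reduces the problem to controlling $\mathbb E[\exp(2\theta|Z(t)|^2)]$.

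Next I would diagonalise $Q_{Y(t)}$: since $Q_W$ is trace class and $\|\mathcal U(s)\|_{\mathrm{op}}$ has an exponential growth bound (Hille--Yosida, as already used in the proof of Lemma~\ref{lemma:gaussian-Y}), the operator $Q_{Y(t)}$ is a symmetric positive trace-class operator with $\mathrm{tr}(Q_{Y(t)})=\mathbb E[|Z(t)|^2]=k<\infty$. Picking an ONB $\{f_n\}$ of eigenvectors with eigenvalues $\lambda_n\ge 0$, the Karhunen--Loève expansion gives $Z(t)=\sum_n\sqrt{\lambda_n}\,\xi_n f_n$ with i.i.d.\ $\xi_n\sim N(0,1)$, so $|Z(t)|^2=\sum_n\lambda_n\xi_n^2$ and $\sum_n\lambda_n=k$.

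For $0\le\theta\le 1/(4k)$, each $\lambda_n\le k$ ensures $4\theta\lambda_n\le 1$. By monotone convergence applied to the partial sums, independence of the $\xi_n$, and the standard $\chi^2_1$ moment generating function,
$$
\mathbb E[\exp(2\theta|Z(t)|^2)]=\prod_{n=1}^{\infty}\mathbb E[\exp(2\theta\lambda_n\xi_n^2)]=\prod_{n=1}^{\infty}\frac{1}{\sqrt{1-4\theta\lambda_n}}.
$$
The key final step is the Weierstrass product inequality $\prod_n(1-x_n)\ge 1-\sum_n x_n$, valid for $x_n\in[0,1]$, applied with $x_n=4\theta\lambda_n$. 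This gives $\prod_n(1-4\theta\lambda_n)\ge 1-4\theta k>0$, whence
$$
\mathbb E[\exp(2\theta|Z(t)|^2)]\le\frac{1}{\sqrt{1-4\theta k}},
$$
and combining with the first inequality produces the claim.

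The only delicate point I anticipate is the rigorous handling of the infinite product: justifying the interchange of expectation and the limit of partial products (monotone convergence works because the integrands are positive and increasing in $N$), and verifying that $Q_{Y(t)}$ is trace class so that $k<\infty$ and the Karhunen--Loève expansion converges in $L^2(\Omega;H)$. Everything else is the Weierstrass inequality and bookkeeping.
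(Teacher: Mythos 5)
Your argument is correct. The reduction is exactly the paper's: you use $\|\mathcal V(t)\|=|Y(t)|^2$ from Proposition~\ref{prop:Vsymposdef}, split $Y(t)=\mathcal U(t)Y_0+Z(t)$ with $Z(t)=\int_0^t\mathcal U(t-s)\eta\,dW(s)$, apply $(a+b)^2\le 2a^2+2b^2$, and factor out $\e^{2\theta|\mathcal U(t)Y_0|^2}$. Where you genuinely diverge is the key estimate $\mathbb E[\exp(2\theta|Z(t)|^2)]\le(1-4\theta k)^{-1/2}$: the paper obtains this (together with $k<\infty$) by citing Fernique's theorem (Peszat--Zabczyk, Thm.~3.31), whereas you reprove it from scratch by diagonalising the covariance operator of $Z(t)$, invoking the Karhunen--Lo\`eve expansion, the $\chi^2_1$ moment generating function, and the Weierstrass product inequality $\prod_n(1-x_n)\ge 1-\sum_n x_n$ for $x_n\in[0,1]$. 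Your route is self-contained and makes the constant transparent (the bound is exact in one dimension, and the infinite product only improves it), at the cost of being tied to the Hilbert-space structure and requiring the extra bookkeeping you correctly identify: trace-class of $Q_{Y(t)}$ (via the Hille--Yosida growth bound and the It\^o isometry, which also gives $k<\infty$) and monotone convergence to pass to the infinite product. The paper's citation is shorter and holds in greater generality. One cosmetic remark: at the endpoint $\theta=1/(4k)$ the right-hand side is $+\infty$, so the inequality is vacuous there; both your argument and the paper's give content precisely for $\theta<1/(4k)$, which is all that is needed.
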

\begin{proof}
From Prop.~\ref{prop:Vsymposdef}, $\|\mathcal V(t)\|=|Y(t)|^2$, and then  by the triangle inequality
$$
\|\mathcal V(t)\|\leq 2|Y(t)-\mathcal U(t)Y_0|^2+2|\mathcal U(t)Y_0|^2.
$$
From the mild solution of $Y(t)$ in \eqref{eq:tensor-heston-Y-process-mild}, 
$$
Y(t)-\mathcal U(t)Y_0=\int_0^t\mathcal U(t-s)\eta\,dW(s),
$$
which is a centered Gaussian random variable. Hence, Fernique's Theorem (see Fernique~\cite{Fernique} or Thm. 3.31 in Peszat and Zabczyk~\cite{PZ}) 
implies that $k=\mathbb E[|\int_0^t\mathcal U(t-s)\eta\,dW(s)|^2]<\infty$ and
$$
\mathbb E\left[\exp\left(\theta\|\mathcal V(t)\|\right)\right]\leq \e^{2\theta|\mathcal U(t)Y_0|^2}\mathbb E\left[\exp\left(2\theta|\int_0^t\mathcal U(t-s)\eta\,dW(s)|^2\right)\right]\leq \e^{2\theta|\mathcal U(t)Y_0|^2}\frac1{\sqrt{1-4\theta k}}
$$
for $0\leq \theta\leq 1/4k$. 
\end{proof}
From this lemma we can conclude that all moments of the real-valued random variable $\|\mathcal V(t)\|$ are finite, as $\|\mathcal V(t)\|\leq\exp(s\|\mathcal V(t)\|)$ for arbitrary small $s>0$. 

If $f,g\in H$, then we see that 
$$
\langle\langle\mathcal V(t),f\otimes g\rangle\rangle=\langle Y(t),f\rangle\langle Y(t),g\rangle.
$$
Recalling Lemma~\ref{lemma:gaussian-Y}, $Y(t;f):=\langle Y(t),f\rangle$ is normally distributed with mean 
$\mathbb E[Y(t;f)]=\langle\mathcal U(t)Y_0,f\rangle$ and variance
$v(f):=\textnormal{Var}(Y(t;f))=\int_0^t |Q^{1/2}_W\eta^*\mathcal U^*(s)f|^2\,ds$. Moreover,
\begin{align*}
c(f,g):&=\textnormal{Cov}(Y(t;f),Y(t;g)) \\
&=\mathbb E\left[\langle\int_0^t\mathcal U(t-s)\eta\,dW(s),f\rangle\langle\int_0^t\mathcal U(t-s)\eta\,dW(s),g\rangle\right] \\
&=\int_0^t\langle Q^{1/2}_W\eta^*\mathcal U^*(s)f,Q^{1/2}_W\eta^*\mathcal U^*(s) g\rangle\,ds.
\end{align*}
A straightforward (but tedious) calculation reveals that the characteristic functional of $\mathcal V(t)$ evaluated at $f\otimes g$ becomes,
$$
\mathbb E\left[\e^{\mathrm{i}\langle\langle\mathcal V(t),f\otimes g\rangle\rangle}\right]=\left(1+v(f)v(g)-c^2(f,g)-2\mathrm{i}c(f,g)\right)^{-1/2},
$$
where we have assumed $Y_0=0$ for simplicity. This characteristic functional is related to a noncentral $\chi^2$-distribution with one degree of freedom. 
We recall that the variance process in the classical Heston model has a noncentral $\chi^2$-distribution (see Heston~\cite{H}).   

Since $\V(t)$ is symmetric and positive definite, we can define its square root $\V^{1/2}(t)$, which turns out to have an explicit expression.
%The square root can be expressed in terms of $\V(t)$ and $Y(t)$ as following.
\begin{proposition}
 The square root process of $\{\mathcal V(t)\}_{t\geq 0}$ is given by
$$
\V^{1/2}(t)=\left\{\begin{array}{cl}|Y(t)|^{-1}\V(t), & Y(t)\neq 0 \\ 0, & Y(t)=0.\end{array}\right. 
$$
\end{proposition}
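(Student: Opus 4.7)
The approach is a direct algebraic verification: guess the answer, square it, and appeal to the uniqueness of the positive square root. Since the statement itself already proposes the formula $|Y(t)|^{-1}\mathcal V(t)$, the plan is simply to check this works.

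First, I would unpack the definition of the tensor product. Since $f\otimes g=\langle f,\cdot\rangle g$, we have $\mathcal V(t)h=Y(t)^{\otimes 2}h=\langle Y(t),h\rangle Y(t)$ for any $h\in H$. Next, I would compute the composition $\mathcal V(t)\circ\mathcal V(t)$ by applying this twice:
\[
\mathcal V(t)(\mathcal V(t)h)=\mathcal V(t)\bigl(\langle Y(t),h\rangle Y(t)\bigr)=\langle Y(t),h\rangle\langle Y(t),Y(t)\rangle Y(t)=|Y(t)|^{2}\,\mathcal V(t)h.
\]
Hence $\mathcal V(t)^{2}=|Y(t)|^{2}\mathcal V(t)$ on $H$.

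On the event $\{Y(t)\neq 0\}$, this identity rearranges to $\bigl(|Y(t)|^{-1}\mathcal V(t)\bigr)^{2}=\mathcal V(t)$, so $|Y(t)|^{-1}\mathcal V(t)$ is a square root of $\mathcal V(t)$. I would then invoke Proposition~\ref{prop:Vsymposdef}: $\mathcal V(t)$ is symmetric and positive semidefinite, and multiplication by the positive scalar $|Y(t)|^{-1}$ preserves both properties, so $|Y(t)|^{-1}\mathcal V(t)$ is itself symmetric and positive semidefinite. By the uniqueness of the positive square root of a positive self-adjoint operator (standard functional calculus), it follows that $\mathcal V^{1/2}(t)=|Y(t)|^{-1}\mathcal V(t)$.

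On the complementary event $\{Y(t)=0\}$ we have $\mathcal V(t)=0$, whose unique positive square root is $0$, consistent with the stated definition. There is no real obstacle; the only point worth a word of justification is appealing to the uniqueness of the positive square root to rule out other symmetric positive operators whose square equals the rank-one operator $\mathcal V(t)$.
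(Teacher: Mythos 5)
Your proposal is correct and follows essentially the same route as the paper: both verify the identity $\mathcal V(t)^2=|Y(t)|^2\mathcal V(t)$, deduce that $|Y(t)|^{-1}\mathcal V(t)$ squares to $\mathcal V(t)$, and use the symmetry and positivity from Proposition~\ref{prop:Vsymposdef}. Your explicit appeal to the uniqueness of the positive square root is a welcome touch the paper leaves implicit in its closing ``the result follows.''
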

\begin{proof}
If $Y(t)=0$, it follows that $\mathcal V(t)=0$, and thus also $\mathcal V^{1/2}(t)=0$. Assume $Y(t)\neq 0$. 
Let $f\in H$. Define $\mathcal{T}(t)=|Y(t)|^{-1}\mathcal{V}(t)$, which is symmetric and positive definite by 
Prop.~\ref{prop:Vsymposdef}. Then,
$$
\mathcal{T}^2(t)f=\mathcal{T}(t)(|Y(t)|^{-1}\mathcal{V}(t)f)=|Y(t)|^{-1}\mathcal{T}(t)(\mathcal{V}(t)f)=|Y(t)|^{-2}\mathcal{V}^2(t)f\,.
$$
But,
\begin{align*}
\mathcal{V}^2(t)f &= \mathcal{V}(t)(Y^{\otimes 2}(t)f) =\<Y(t),f\>_H\mathcal{V}(t)Y(t) =\<Y(t),f\>_H \<Y(t),Y(t)\>_H Y(t) =|Y(t)|^2\mathcal{V}(t)f\,.
\end{align*}
Hence, $\mathcal{T}^2(t)=\mathcal{V}(t)$, and the result follows.
\end{proof}

Consider for a moment the operator $\mathcal F:H\rightarrow\mathcal H$ defined as $f\mapsto\mathcal F(f):=|f|^{-1}f^{\x 2}$ for $f\neq 0$ and
$\mathcal F(0)=0$. 
\begin{lemma}
\label{lem:prel-F}
The operator $\mathcal F:H\rightarrow\mathcal H$ is locally Lipschitz continuous.
\end{lemma}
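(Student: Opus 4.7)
The plan is to show that for every bounded set $B\subset H$ there is $L_B>0$ with $\|\mathcal F(f)-\mathcal F(g)\|\leq L_B|f-g|$ for all $f,g\in B$; in fact I expect this to come out with $L_B=3$ independently of $B$. The starting point is the identity $\|\mathcal F(f)\|=|f|$ for every $f\in H$, which is immediate at $f=0$ and otherwise follows from the observation $\|f^{\x 2}\|=|f|^2$ used in the proof of Proposition~\ref{prop:Vsymposdef}. This identity already settles the cases in which one of $f,g$ vanishes, since then $\|\mathcal F(f)-\mathcal F(g)\|=|f-g|$, and it also shows that $\mathcal F$ is continuous at the origin.

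For $f,g$ both nonzero, first I would perform the standard telescoping step
\[
 \mathcal F(f)-\mathcal F(g)=|f|^{-1}\bigl(f^{\x 2}-g^{\x 2}\bigr)+\bigl(|f|^{-1}-|g|^{-1}\bigr)g^{\x 2},
\]
so that each term can be estimated separately. The algebraic identity $f^{\x 2}-g^{\x 2}=(f-g)\x f+g\x(f-g)$, together with the one-line Parseval calculation $\|a\x b\|=|a||b|$, yields $\|f^{\x 2}-g^{\x 2}\|\leq(|f|+|g|)|f-g|$, hence the first summand is bounded by $|f|^{-1}(|f|+|g|)|f-g|$. For the second summand, the reverse triangle inequality gives $\bigl||f|^{-1}-|g|^{-1}\bigr|\leq|f-g|/(|f||g|)$, and $\|g^{\x 2}\|=|g|^2$ then contributes a factor $|g|$, so this summand is bounded by $(|g|/|f|)|f-g|$.

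The only point that is not automatic is the apparent singularity of the factor $|f|^{-1}$ when $|f|$ is small, and this is the main obstacle: a naive bound would explode as $|f|\to 0$. It is resolved by assuming without loss of generality that $|f|\geq|g|$, which one may do by the symmetry of the claim. Under this assumption $|g|/|f|\leq 1$, so the first summand is at most $2|f-g|$ and the second at most $|f-g|$, and the combined estimate gives $\|\mathcal F(f)-\mathcal F(g)\|\leq 3|f-g|$. Since this constant is independent of $f$ and $g$, local (and in fact global) Lipschitz continuity of $\mathcal F$ follows. The key point of the decomposition is that the $|g|^2$ coming from $\|g^{\x 2}\|$ exactly cancels the $1/(|f||g|)$ from the difference of reciprocals, leaving a benign ratio $|g|/|f|$ that a symmetry reduction controls.
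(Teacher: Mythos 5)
Your argument is correct, and it follows the same telescoping skeleton as the paper's proof but with two refinements that change the outcome. The paper splits $|f|^{-1}f^{\otimes 2}-|g|^{-1}g^{\otimes 2}$ with the factor $|g|^{-1}$ in front of the difference $f^{\otimes 2}-g^{\otimes 2}$, estimates that difference via Parseval and $(x+y)^2\le 2x^2+2y^2$, and ends up with the constant $|f||g|^{-1}+\sqrt{2}\bigl(1+|f||g|^{-1}\bigr)^{1/2}$, which depends on the ratio $|f|/|g|$ and blows up as the smaller norm tends to zero; the behaviour at the origin is then covered only by the separate continuity statement $\|\mathcal F(f)\|=|f|$. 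You instead use the exact rank-one identity $f^{\otimes 2}-g^{\otimes 2}=(f-g)\otimes f+g\otimes(f-g)$ together with $\|a\otimes b\|=|a|\,|b|$, and — crucially — you exploit the symmetry of both sides of the desired inequality to assume $|f|\ge|g|$, so that the reciprocal of the \emph{larger} norm sits in front of the difference term and the ratio $|g|/|f|$ is at most one. This yields the uniform bound $\|\mathcal F(f)-\mathcal F(g)\|\le 3|f-g|$, i.e.\ global Lipschitz continuity (the zero cases being handled directly by $\|\mathcal F(f)-\mathcal F(0)\|=|f|$), which is strictly stronger than the lemma's claim and cleaner near the origin than the paper's estimate; the paper's version, on the other hand, requires no case distinction on which of $|f|,|g|$ is larger and is already sufficient for the way the lemma is used later (path continuity of $\mathcal V^{1/2}$ via continuity of $Y$).
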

\begin{proof}
It holds for $f\neq 0$ that 
$$
\|\mathcal F(f)\|=|f|^{-1}\|f^{\x 2}\|=|f|^{-1}|f|^2=|f|.
$$
Hence, if $f\rightarrow0$ in $H$, then $\mathcal F(f)\rightarrow 0$ in $\mathcal H$, so $\mathcal F$ is continuous in zero. Next, suppose that 
$f,g\in H$ are both non-zero. Then, by a simple application of the triangle inequality and its reverse, 
\begin{align*}
\|\mathcal F(f)-\mathcal F(g)\|&=\| |f|^{-1}f^{\x 2}-|g|^{-1}g^{\x 2}\| \\
&\leq \left| |f|^{-1}-|g|^{-1}\right| \|f^{\x 2}\|+|g|^{-1}\|f^{\x 2}-g^{\x 2}\| \\
&= |f|^{-1}|g|^{-1}\left| |f|-|g|\right| |f|^2+|g|^{-1}\|f^{\x 2}-g^{\x 2}\| \\
&\leq |f||g|^{-1}|f-g|+|g|^{-1}\|f^{\x 2}-g^{\x 2}\|.
\end{align*}
Again, by triangle inequality and the elementary inequality $(x+y)^2\leq 2x^2+2y^2$, we find for an ONB $\{e_n\}_{n\in\mathbb N}$ in $H$,
\begin{align*}
\|f^{\x 2}-g^{\x 2}\|^2&=\sum_{n=1}|(f^{\x 2}-g^{\x 2})e_n|^2 \\
&=\sum_{n=1}^{\infty}|\langle f,e_n\rangle f-\langle g,e_n\rangle g|^2 \\
&\leq 2\sum_{n=1}^{\infty}\langle f,e_n\rangle^2|f-g|^2+2\sum_{n=1}^{\infty}|g|^2\langle f-g,e_n\rangle^2 \\
&=2|f-g|^2|f|^2+2|g|^2|f-g|^2.
\end{align*}
Therefore,
$$
\|\mathcal F(f)-\mathcal F(g)\|\leq\left(|f||g|^{-1}+\sqrt{2}(1+|f| |g|^{-1})^{1/2}\right)|f-g|
$$
which shows locally Lipschitz continuity of $\mathcal F$. 
\end{proof}
By a result of 
Kotelenez~\cite{Kotelenez} (see Peszat and Zabczyk~\cite[Thm.~9.20]{PZ}), there exists a continuous version of $\{Y(t)\}_{t\geq 0}$ in
\eqref{eq:tensor-heston-Y-process-mild} if the $C_0$-semigroup $\{\mathcal U(t)\}_{t\geq 0}$ is quasi-contractive, that is, if for some constant $\beta$, $\|\mathcal U(t)\|_{\text{op}}\leq \e^{\beta t}$ for all $t\geq 0$. 
Thus, from Lemma~\ref{lem:prel-F}, we can conclude that there exists a version of $\{\mathcal V^{1/2}(t)\}_{t\geq 0}$ (namely defined by the version of $\{Y(t)\}_{t\geq 0}$ with continuous paths) which has continuous paths in $\mathcal H$, when $\{\mathcal U(t)\}_{t\geq 0}$ is quasi-contractive. 
We remark that $|Y(t)|>0$ $a.s.$ This holds true since by Parseval's identity 
$$
|Y(t)|^2=\sum_{n=1}^{\infty}\<Y(t),e_n\>^2\,,
$$ 
for $\{e_n\}_{n=1}^{\infty}$ an ONB of $H$. By Lemma~\ref{lemma:gaussian-Y}, $\<Y(t),e_n\>$ is a Gaussian random variable for all $n$, and 
$P\left(\<Y(t),e_n\>=0\right)=0$. If $|Y(t)|=0$, then we must have $\<Y(t),e_n\>^2=0$ for all $n$. But this happens with probability zero, and 
it follows that $P(|Y(t)|=0)=0$. 

We move our attention to a Cholesky-type of decomposition of the tensor Heston stochastic variance process $\{\mathcal V(t)\}_{t\geq 0}$. To this end, 
introduce an $\mathcal F_t$-adapted $H$-valued stochastic process $\{Z(t)\}_{t\geq 0}$ for which $|Z(t)|=1$, i.e., a process living on the unit ball of $H$. 
%\subsection{Cholesky decomposition of the variance process} \label{section_cholesky}
%We will define the stochastic volatility process by a Cholesky decomposition of the variance process.
Define the operator $\Gamma_Z(t) :H\rightarrow H$ for $t\geq 0$ by
\begin{equation}
\label{def:gammaZ}
 \Gamma_Z(t) := Z(t)\x Y(t). %\label{gamma}
\end{equation}
The following lemma collects the elementary properties of this operator-valued process.
\begin{lemma}
\label{lemma:gamma-prop}
 It holds that $\{\Gamma_Z(t)\}_{t\geq 0}$ is an $\mathcal F_t$-adapted stochastic process with values in $\mathcal H$. 
%$\Gamma(t)\in \mathcal{H}$ for all $t\geq 0$.
\end{lemma}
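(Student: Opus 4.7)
The plan is to verify the two claims separately: membership in $\mathcal{H}$ and $\mathcal{F}_t$-adaptedness. Both reduce to elementary properties of the tensor product together with what we already know about $Y$ and $Z$.

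First, I would show pointwise that $\Gamma_Z(t) \in \mathcal{H}$ by computing the Hilbert-Schmidt norm along an orthonormal basis $\{e_n\}_{n=1}^\infty$ of $H$, exactly as in the proof of Proposition~\ref{prop:Vsymposdef}. Using the definition $f \otimes g = \langle f, \cdot\rangle g$ and Parseval's identity, I obtain
$$
\|\Gamma_Z(t)\|^2 = \sum_{n=1}^\infty |\langle Z(t), e_n\rangle Y(t)|^2 = |Y(t)|^2 \sum_{n=1}^\infty \langle Z(t), e_n\rangle^2 = |Y(t)|^2 |Z(t)|^2 = |Y(t)|^2,
$$
where the last equality uses the assumption $|Z(t)| = 1$. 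Since $Y(t) \in H$, this is finite and $\Gamma_Z(t) \in \mathcal{H}$.

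For $\mathcal{F}_t$-adaptedness, I would argue that the tensor product map $\tau: H \times H \to \mathcal{H}$, $\tau(z,y) = z \otimes y$, is continuous (hence Borel measurable). This follows from the bilinear estimate
$$
\|z_1 \otimes y_1 - z_2 \otimes y_2\| \leq \|(z_1 - z_2) \otimes y_1\| + \|z_2 \otimes (y_1 - y_2)\| \leq |z_1 - z_2||y_1| + |z_2||y_1 - y_2|,
$$
where the individual norms are computed exactly as above. Since $Z(t)$ and $Y(t)$ are $\mathcal{F}_t$-measurable as $H$-valued random variables, the composition $\Gamma_Z(t) = \tau(Z(t), Y(t))$ is $\mathcal{F}_t$-measurable as an $\mathcal{H}$-valued random variable, which gives adaptedness.

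No step here is genuinely hard; the only mild subtlety is making sure one uses the correct definition $f \otimes g = \langle f, \cdot\rangle g$ consistently, so that $Z(t) \otimes Y(t)$ really is the rank-one operator $f \mapsto \langle Z(t), f\rangle Y(t)$ whose Hilbert-Schmidt norm equals $|Z(t)||Y(t)|$. Everything else is a direct consequence of Parseval's identity and the continuity of the tensor product.
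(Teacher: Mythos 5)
Your proof is correct and follows essentially the same route as the paper: the Hilbert--Schmidt norm computation via Parseval's identity giving $\|\Gamma_Z(t)\|^2=|Y(t)|^2|Z(t)|^2=|Y(t)|^2$ is identical, and your continuity-of-the-tensor-product argument is simply a more explicit version of the paper's one-line remark that measurability follows from the assumptions on $Z(t)$ and $Y(t)$.
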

\begin{proof}
By definition $\Gamma_Z(t)$ becomes a linear operator, where boundedness follows readily from the Cauchy-Schwartz inequality. 
For an ONB $\{e_n\}_{n\in\mathbb N}$ in $H$, we have from Parseval's identity
 \begin{equation*}
 \| \Gamma_Z(t)\|^2 = \sum_{n=1}^{\infty}|\Gamma_Z(t) e_n|^2 = \sum_{n=1}^{\infty} |\langle Z(t),e_n\rangle Y(t)|^2 = |Y(t)|^2 |Z(t)|^2=|Y(t)|^2< \infty.
 \end{equation*}
The $\mathcal F_t$-measurability follows by assumption on $Z(t)$ and definition of $Y(t)$. The proof is complete.
\end{proof}
We notice that with the convention $0/0=1$, we can define $Z(t)=Y(t)/|Y(t)|$ and recover $\mathcal V^{1/2}(t)=\Gamma_{Z}(t)$ for all
$t\geq 0$ such that $Y(t)\neq 0$. We show that 
for general unitary processes, $\{\Gamma_Z(t)\}_{t\geq 0}$ defines a Cholesky decomposition of the tensor Heston stochastic variance process:
\begin{proposition}
\label{prop:tensor-heston-cholesky}
The tensor Heston stochastic variance process $\{\V(t)\}_{t\geq 0}$ can be decomposed as
 \begin{align*}
  \V(t) &= \Gamma_Z(t) \Gamma^*_Z(t),
 \end{align*}
for all $t\geq 0$, where $\Gamma_Z^*(t)=Y(t)\x Z(t)$. 
\end{proposition}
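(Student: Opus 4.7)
My plan is to verify the two asserted identities by direct computation from the tensor-product convention $f\otimes g=\langle f,\cdot\rangle g$, exploiting the unitary normalization $|Z(t)|=1$ in the final step. This is a computational check rather than a deep argument, so I would organize it in two short stages.

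First, I would identify the adjoint. By the definition of $\Gamma_Z(t)$, for arbitrary $h,k\in H$,
\[
\langle\Gamma_Z(t)h,k\rangle=\langle\langle Z(t),h\rangle Y(t),k\rangle=\langle Z(t),h\rangle\langle Y(t),k\rangle.
\]
On the other hand, applying the operator $Y(t)\otimes Z(t)$ to $k$ yields $\langle Y(t),k\rangle Z(t)$, so
\[
\langle h,(Y(t)\otimes Z(t))k\rangle=\langle Y(t),k\rangle\langle h,Z(t)\rangle,
\]
and the two expressions agree. Since $h,k$ are arbitrary, the uniqueness of the adjoint gives $\Gamma_Z^*(t)=Y(t)\otimes Z(t)$. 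This also shows that $\Gamma_Z^*(t)$ is bounded, and by the same Parseval computation as in Lemma~\ref{lemma:gamma-prop} it lies in $\mathcal{H}$.

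Second, I would compute the composition $\Gamma_Z(t)\Gamma_Z^*(t)$ pointwise. For $h\in H$,
\[
\Gamma_Z(t)\Gamma_Z^*(t)h=\Gamma_Z(t)\bigl(\langle Y(t),h\rangle Z(t)\bigr)=\langle Y(t),h\rangle\langle Z(t),Z(t)\rangle Y(t).
\]
The unitarity assumption $|Z(t)|=1$ collapses this to $\langle Y(t),h\rangle Y(t)=(Y(t)^{\otimes 2})h=\mathcal V(t)h$, which is exactly the asserted decomposition.

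There is no real obstacle here; the only thing to be careful about is the order of the arguments in the tensor product, since the convention $f\otimes g=\langle f,\cdot\rangle g$ means the ``first'' slot is the one that gets paired with the input. The role of the normalization $|Z(t)|=1$ enters at the very last step and is what makes the decomposition actually reproduce $\mathcal V(t)$ rather than a rescaling of it; this highlights why one imposes the unit-ball condition on $Z$ in the definition of a Cholesky-type factor.
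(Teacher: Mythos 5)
Your proof is correct and follows essentially the same route as the paper: identify the adjoint $\Gamma_Z^*(t)=Y(t)\otimes Z(t)$ by a direct inner-product computation, then compose $\Gamma_Z(t)\Gamma_Z^*(t)$ pointwise and use $|Z(t)|=1$ to recover $\mathcal V(t)$. No issues.
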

\begin{proof}
 Since, for any $f,g\in H$ and $t\geq 0$, 
 \begin{align*}
  \langle\Gamma^*_Z(t)f,g\rangle&= \langle f,\Gamma_Z(t) g\rangle=\langle f,\langle Z(t), g\rangle Y(t)\rangle=\langle Z(t), g\rangle\langle f, Y(t)\rangle = \langle\langle f, Y(t)\rangle Z(t), g\rangle,
 \end{align*}
we have that $\Gamma_Z^*(t)=Y(t)\x Z(t)$.
It follows that for any $f\in H$,
\begin{align*}
 \Gamma_Z(t)\Gamma_Z^*(t)f &= \Gamma_Z(t) ( \langle Y(t), f\rangle Z(t))=% \langle Y(t),f\rangle(Z(t)\x Y(t))(Z(t))= 
\langle Y(t),f\rangle|Z(t)|^2Y(t)
=Y^{\x 2}(t)(f)=\mathcal V(t)(f).
\end{align*}
The result follows.
\end{proof}
A simple choice of an $H$-valued stochastic process $\{Z(t)\}_{t\geq 0}$ is $Z(t)=\gamma$, where $\gamma\in H$ with $|\gamma|=1$.

\section{Ornstein-Uhlenbeck process with stochastic volatility}

Define the $H$-valued Ornstein-Uhlenbeck process $\{X(t)\}_{t\geq 0}$ by 
\begin{equation} \label{X}
 dX(t) = \mathcal C X(t)dt + \Gamma_Z(t) dB(t), \mspace{40mu} X(0)=X_0 \in H,
\end{equation}
where $\mathcal C$ is a densely defined operator on $H$ generating a $C_0$-semigroup $\mathcal{S}$,
and $\{B(t)\}_{t\geq 0}$ is a Wiener process in $H$ with covariance operator $Q_B\in L(H)$ (i.e., $Q_B$ is a symmetric and positive definite trace class operator).
We assume that $\{B(t)\}_{t\geq 0}$ is independent of $\{W(t)\}_{t\geq 0}$ and recall $\{\Gamma_Z(t)\}_{t\geq 0}$ from \eqref{def:gammaZ}.

The next lemma validates the existence of the stochastic integral in \eqref{X}:
\begin{lemma}
\label{lem:finite-integral-norm}
It holds that
\begin{align*}
 \mathbb{E}\left[\int_0^t \lVert \Gamma_Z(s)Q_B^{1/2} \rVert^2 ds\right] &\leq \mathrm{Tr}(Q_B)\,\mathbb{E}\left[\int_0^t | Y(s)|^2 ds \right]<\infty.
\end{align*}
\end{lemma}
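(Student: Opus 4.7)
The plan is to compute the Hilbert--Schmidt norm $\|\Gamma_Z(s)Q_B^{1/2}\|^2$ explicitly and then bound it in terms of $|Y(s)|^2$ times a constant involving $Q_B$.

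First, I would pick an orthonormal basis $\{e_n\}_{n\in\mathbb{N}}$ of $H$ and expand
\[
\|\Gamma_Z(s)Q_B^{1/2}\|^2 = \sum_{n=1}^\infty |\Gamma_Z(s) Q_B^{1/2} e_n|^2.
\]
Since $\Gamma_Z(s) = Z(s)\otimes Y(s) = \langle Z(s),\cdot\rangle Y(s)$, each summand equals $\langle Z(s), Q_B^{1/2}e_n\rangle^2 |Y(s)|^2$. Summing and using Parseval together with the self-adjointness of $Q_B^{1/2}$ gives
\[
\|\Gamma_Z(s)Q_B^{1/2}\|^2 = |Y(s)|^2 \sum_{n=1}^\infty \langle Q_B^{1/2}Z(s), e_n\rangle^2 = |Y(s)|^2\, |Q_B^{1/2}Z(s)|^2.
\]

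Next, since $|Z(s)|=1$ and $Q_B$ is a symmetric positive trace class operator,
\[
|Q_B^{1/2}Z(s)|^2 = \langle Q_B Z(s), Z(s)\rangle \leq \|Q_B\|_{\mathrm{op}} \leq \mathrm{Tr}(Q_B).
\]
Combining, $\|\Gamma_Z(s)Q_B^{1/2}\|^2 \leq \mathrm{Tr}(Q_B)\,|Y(s)|^2$. Taking expectation and applying Tonelli's theorem to interchange expectation and time integration then yields the claimed inequality.

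It remains to show finiteness of $\mathbb{E}[\int_0^t |Y(s)|^2\,ds]$. Using the mild form \eqref{eq:tensor-heston-Y-process-mild} and the identity $\mathbb{E}[|Y(s)|^2] = |\mathcal{U}(s)Y_0|^2 + \mathrm{Tr}(Q_{Y(s)})$, the deterministic term is continuous in $s$ by the exponential growth bound of the $C_0$-semigroup, and for the covariance term,
\[
\mathrm{Tr}(Q_{Y(s)}) = \int_0^s \mathrm{Tr}\!\left(\mathcal{U}(r)\eta Q_W \eta^* \mathcal{U}^*(r)\right)dr \leq \int_0^s \|\mathcal{U}(r)\|_{\mathrm{op}}^2\,\|\eta\|_{\mathrm{op}}^2\,\mathrm{Tr}(Q_W)\,dr,
\]
which is finite on $[0,t]$ again by the Hille--Yosida exponential bound. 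Another Tonelli step finishes the argument.

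The main (mild) obstacle is making sure the operator-norm vs. trace-class bookkeeping is done carefully: one needs the inequalities $\|Q_B\|_{\mathrm{op}}\leq \mathrm{Tr}(Q_B)$ and that $\mathcal{U}(r)\eta Q_W \eta^*\mathcal{U}^*(r)$ is trace class with a trace controlled by $\|\mathcal{U}(r)\|_{\mathrm{op}}^2\,\|\eta\|_{\mathrm{op}}^2\,\mathrm{Tr}(Q_W)$, both of which are standard facts about positive trace class operators and the ideal property of trace class operators under composition with bounded operators.
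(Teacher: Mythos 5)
Your proposal is correct and follows essentially the same route as the paper: compute $\|\Gamma_Z(s)Q_B^{1/2}\|^2=|Y(s)|^2\,|Q_B^{1/2}Z(s)|^2$ via Parseval, bound $|Q_B^{1/2}Z(s)|^2=\langle Q_B Z(s),Z(s)\rangle\leq \mathrm{Tr}(Q_B)$ using $|Z(s)|=1$, and establish $\mathbb{E}[\int_0^t|Y(s)|^2\,ds]<\infty$ from the mild solution together with the Hille--Yosida exponential bound on $\|\mathcal{U}(r)\|_{\mathrm{op}}$ and $\mathrm{Tr}(Q_W)<\infty$. The only cosmetic differences are that you pass through $\|Q_B\|_{\mathrm{op}}$ and the trace of $Q_{Y(s)}$ where the paper uses the eigenbasis of $Q_B$ with Cauchy--Schwarz and the It\^o isometry, which amount to the same standard facts.
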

\begin{proof}
Let $\{e_n\}_{n\in\mathbb N}$ be an ONB in $H$. By Parseval's identity, we have
\begin{align*}
 \lVert \Gamma_Z(s) Q_B^{1/2}\rVert^2 &= \sum_{n=1}^{\infty}|\Gamma_Z(s) Q_B^{1/2} e_n|^2 \\
 &= \sum_{n=1}^{\infty}|\<Z(t),Q_B^{1/2}e_n\>_H Y(s)|^2  \\
 &= |Y(s)|^2 \sum_{n=1}^{\infty} \<Z(t),Q_B^{1/2}e_n\>^2 \\
 &= |Y(s)|^2 \sum_{n=1}^{\infty} \<Q_B^{1/2}Z(t),e_n\>^2 \\
 &= |Y(s)|^2 |Q_B^{1/2}Z(t)|^2.
% &\leq |Y(s)|^2 \sum_{n=1}^{\infty} |Q_B^{1/2} e_i|^2 \\
% &=|Y(s)|^2 \text{Tr}(Q_B),
\end{align*}
As $Q_B$ is a symmetric, positive definite trace class operator, we can find an ONB of eigenvectors $\{v_n\}_{n\in\mathbb N}$ in $H$ with corresponding positive eigenvalues $\{\lambda_n\}_{n\in\mathbb N}$ of $Q_B$, such that $Q_Bv_n=\lambda_nv_n$ for all $n\in\mathbb N$ and therefore
$\text{Tr}(Q_B)=\sum_{n=1}^{\infty}\lambda_n<\infty$. We have by Parseval's identity and Cauchy-Schwartz' inequality, 
\begin{align*}
|Q_B^{1/2}Z(t)|^2&=\langle Q_B^{1/2}Z(t),Q_B^{1/2}Z(t)\rangle \\
&=\langle Q_BZ(t),Z(t)\rangle \\
&=\sum_{n=1}^{\infty}\langle Z(t),Q_B v_n\rangle\langle Z(t),v_n\rangle \\
&=\sum_{n=1}^{\infty}\lambda_n\langle Z(t),v_n\rangle^2 \\
&\leq\sum_{n=1}^{\infty}\lambda_n|Z(t)|^2|v_n|^2 \\
&=\text{Tr}(Q_B),
\end{align*}
since by assumption $|Z(t)|=|v_n|=1$. Next we show that $\E[\int_0^t|Y(s)|^2\,ds]<\infty$. From the expression 
in \eqref{eq:tensor-heston-Y-process-mild}, it follows from an elementary inequality that
\begin{align*}
\E\left[|Y(t)|^2\right]&=\E\left[|\mathcal U(t)Y_0+\int_0^t\mathcal U(t-s)\eta\,dW(s)|^2\right] \\
&\leq 2|\mathcal U(t)Y_0|^2+2\E\left[|\int_0^t\mathcal U(t-s)\eta\,dW(s)|^2\right] \\
&=2|\mathcal U(t)Y_0|^2+2\int_0^t\|\mathcal U(t-s)\eta Q_W^{1/2}\|^2\,ds,
\end{align*}
where the last equality is a consequence of the It\^o isometry. The Hille-Yoshida Theorem (see Engel and Nagel~\cite[Prop.~I.5.5]{EN}) implies that
$\|\mathcal U(t)\|_{\text{op}}\leq K\exp(w t)$ for constants $K>0$ and $w$. Thus,
\begin{align*}
\E\left[|Y(t)|^2\right]&\leq 2K^2\e^{2wt}|Y_0|^2+2\int_0^tK^2\e^{2w(t-s)}\,ds\|\eta\|_{\text{op}}^2\|Q^{1/2}_W\|^2.
\end{align*}
Finally, we observe that $\|Q^{1/2}_W\|^2=\text{Tr}(Q_W)<\infty$, and hence the lemma follows.
\end{proof}
The integral $\int_0^t \Gamma_Z(s)dB(s)$ is well-defined, and therefore according to Peszat and Zabczyk~\cite[Sect.~9.4]{PZ} \eqref{X} has a unique mild solution given by
\begin{equation}
\label{OU-mild-sol}
 X(t) = \mathcal{S}(t)X_0 + \int_0^t \mathcal{S}(t-s)\Gamma_Z(s)\,dB(s), \qquad t\geq 0.
\end{equation}
We remark in passing that the stochastic integral is well-defined in \eqref{OU-mild-sol} since $\mathcal S(t)\in L(H)$ with an operator norm
which is growing at most exponentially by the Hille-Yoshida Theorem (see Engel and Nagel~\cite[Prop.~I.5.5]{EN}).

We analyse the characteristic functional of $\{X(t)\}_{t\geq 0}$. To this end, denote by 
$\{\mathcal F_t^Y\}_{t\geq 0}$ the filtration generated by $\{Y(t)\}_{t\geq 0}$. 
\begin{proposition}
\label{prop:ch-funct-OU}
Assume that the process 
$\{Z(t)\}_{t\geq 0}$ in $\{\Gamma_Z(t)\}_{t\geq 0}$ defined in \eqref{def:gammaZ} is $\mathcal F_t^Y$-adapted.  It holds for any $f\in H$ 
\begin{align*}
  \mathbb{E}\left[\e^{\mathrm{i}\left<X(t),f\right>}\right] &=\e^{\mathrm{i}\left<\mathcal{S}(t)X_0,f\right>}\mathbb{E}\left[\exp\left(-\frac{1}{2}
\langle\int_0^t|Q_B^{1/2}Z(s)|^2\mathcal S(t-s)\mathcal V(s)\mathcal S^*(t-s)\,dsf,f\rangle\right)\right],
 \end{align*}
where the $ds$-integral on the right-hand side is a Bochner integral in $L(H)$.
% where
 %\begin{align*}
 % \Psi(t) =& \int_0^t \<\mathcal{U}(s)^*\mathcal{S}(t-s)^*f, Y_0\>_H^2 ds + \int_0^t \left(\int_0^s \big\Vert Q_W^{1/2}\mathcal{U}(s-u)^*\mathcal{S}(t-s)^*f \rVert_H dw(u)\right)^2 ds\\
 % &+ 2\int_0^t \int_u^t \<\mathcal{U}(s)^*\mathcal{S}(t-s)^*f, Y_0\>_H \big\Vert Q_W^{1/2}\mathcal{U}(s-u)^*\mathcal{S}(t-s)^*f \rVert_H ds\,dw(u).\\
% \end{align*}
\end{proposition}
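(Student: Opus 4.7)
The plan is to condition on the filtration $\{\mathcal{F}_t^Y\}_{t\geq 0}$ and exploit the independence of $B$ and $W$ to reduce the stochastic integral in the mild solution \eqref{OU-mild-sol} to a conditionally centered Gaussian real random variable. First I would project onto $f$ and pull the deterministic operators $\mathcal{S}^*(t-s)$ across the $H$-valued stochastic integral (exactly as in the proof of Lemma~\ref{lemma:gaussian-Y}) to obtain
\begin{equation*}
\langle X(t),f\rangle = \langle\mathcal{S}(t)X_0,f\rangle + \int_0^t\langle\Gamma_Z^*(s)\mathcal{S}^*(t-s)f,\, dB(s)\rangle.
\end{equation*}
Because $Z$ is $\mathcal{F}_t^Y$-adapted, the integrand $\Gamma_Z^*(s)\mathcal{S}^*(t-s)f = \langle\mathcal{S}^*(t-s)f,Y(s)\rangle Z(s)$ is $\mathcal{F}_t^Y$-measurable for every $s\in[0,t]$.

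Next, since $B$ is independent of $W$ and hence of $\mathcal{F}_t^Y$, conditioning on $\mathcal{F}_t^Y$ freezes the integrand as a deterministic function of $s$ while leaving $B$ a Wiener process with covariance $Q_B$. The Wiener integral of a deterministic integrand is centered Gaussian, so conditionally on $\mathcal{F}_t^Y$ the stochastic integral above is a centered Gaussian whose variance, by the It\^o isometry, equals
\begin{equation*}
\int_0^t|Q_B^{1/2}\Gamma_Z^*(s)\mathcal{S}^*(t-s)f|^2\,ds.
\end{equation*}

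The key algebraic simplification is the identity $\Gamma_Z(s)Q_B\Gamma_Z^*(s)=|Q_B^{1/2}Z(s)|^2\,\V(s)$. Indeed, using $\Gamma_Z=Z\x Y$ and $\Gamma_Z^*=Y\x Z$ from Proposition~\ref{prop:tensor-heston-cholesky}, for any $h\in H$,
\begin{equation*}
\Gamma_Z(s)Q_B\Gamma_Z^*(s)h = \langle Y(s),h\rangle\,\langle Q_BZ(s),Z(s)\rangle\,Y(s) = |Q_B^{1/2}Z(s)|^2\,\V(s)h.
\end{equation*}
Rewriting $|Q_B^{1/2}\Gamma_Z^*(s)\mathcal{S}^*(t-s)f|^2=\langle \Gamma_Z(s)Q_B\Gamma_Z^*(s)\mathcal{S}^*(t-s)f,\mathcal{S}^*(t-s)f\rangle$ and collapsing the adjoints turns the conditional variance into the inner product
\begin{equation*}
\langle\int_0^t|Q_B^{1/2}Z(s)|^2\,\mathcal{S}(t-s)\V(s)\mathcal{S}^*(t-s)\,ds\,f,\,f\rangle.
\end{equation*}
The tower property applied to $\mathbb{E}[\e^{\mathrm{i}\langle X(t),f\rangle}]=\e^{\mathrm{i}\langle\mathcal{S}(t)X_0,f\rangle}\,\mathbb{E}\!\left[\mathbb{E}[\e^{\mathrm{i}\int_0^t\langle\Gamma_Z^*(s)\mathcal{S}^*(t-s)f,dB(s)\rangle}\mid\mathcal{F}_t^Y]\right]$, combined with the Gaussian characteristic function applied conditionally, then yields the claimed formula.

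The main technical points to verify are, first, that the operator-valued Bochner integral on the right-hand side is well-defined pathwise — which follows from the Hille--Yoshida bound $\|\mathcal{S}(t)\|_{\text{op}}\leq K\e^{wt}$, from $|Q_B^{1/2}Z(s)|^2\leq\text{Tr}(Q_B)$ as already established in Lemma~\ref{lem:finite-integral-norm}, and from $\|\V(s)\|=|Y(s)|^2<\infty$ a.s. — and second, the conditional Gaussianity step, which requires a brief justification via the standard approximation of Wiener integrals by simple integrands and the independence of $B$ from $\mathcal{F}_t^Y$. These are the only real obstacles; everything else is algebraic manipulation of the tensor structure.
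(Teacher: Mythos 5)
Your proposal is correct and follows essentially the same route as the paper's proof: project the mild solution onto $f$, use the independence of $B$ from $\mathcal F_t^Y$ together with the tower property to get conditional Gaussianity, and compute the conditional variance $\int_0^t|Q_B^{1/2}\Gamma_Z^*(s)\mathcal S^*(t-s)f|^2\,ds$ via the tensor structure of $\Gamma_Z^*$. Your identity $\Gamma_Z(s)Q_B\Gamma_Z^*(s)=|Q_B^{1/2}Z(s)|^2\mathcal V(s)$ is just a repackaging of the paper's direct evaluation of $Q_B^{1/2}\Gamma_Z^*(s)\mathcal S^*(t-s)f$, and your added remarks on the well-definedness of the Bochner integral are a harmless (indeed welcome) supplement.
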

\begin{proof}
With $f\in H$ we get from \eqref{OU-mild-sol},
$$
\E\left[\e^{\mathrm{i}\langle X(t),f\rangle}\right]=\e^{\mathrm{i}\langle\mathcal S(t)X_0,f\rangle}\E\left[\e^{\mathrm{i}\langle\int_0^t
\mathcal S(t-s)\Gamma_Z(s)\,dB(s),f\rangle}\right].
$$
Recall that $\{B(t)\}_{t\geq 0}$ and $\{W(t)\}_{t\geq 0}$ are independent. Since $\{Z(t)\}_{t\geq 0}$ is assumed $\mathcal F_t^Y$-adapted,
we will have that $\{Z(t)\}_{t\geq 0}$ and $\{Y(t)\}_{t\geq 0}$, and therefore $\{\Gamma_Z(t)\}_{t\geq 0}$, are independent of $\{B(t)\}_{t\geq 0}$.
By the tower property of conditional expectation and the Gaussianity of $\int_0^t\mathcal S(t-s)\Gamma_Z(s)\,dB(s)$ conditional on $\mathcal F_t^Y$,
\begin{align*}
\E\left[\e^{\mathrm{i}\langle\int_0^t
\mathcal S(t-s)\Gamma_Z(s)\,dB(s),f\rangle}\right]&=\E\left[\E\left[\e^{\mathrm{i}\langle\int_0^t\mathcal S(t-s)\Gamma_Z(s)\,dB(s),f\rangle}\vert\mathcal F_t^Y\right]\right] \\
&=\E\left[\e^{-\frac12\int_0^t|Q_B^{1/2}\Gamma_Z^*(s)\mathcal S^*(t-s)f|^2\,ds}\right].
\end{align*} 
Recalling from Prop.~\ref{prop:tensor-heston-cholesky}, we have $\Gamma_Z^*(s)=Y(s)\x Z(s)$. Hence,
$$
Q^{1/2}_B\Gamma_Z^*(s)(\mathcal S^*(t-s)f)=Q^{1/2}_B(\langle\mathcal S^*(t-s)f,Y(s)\rangle Z(s))=\langle Y(s), \mathcal S^*(t-s)f\rangle Q^{1/2}_BZ(s),
$$
and
\begin{align*}
|Q^{1/2}_B\Gamma_Z^*(s)\mathcal S^*(t-s)f|^2&=\langle Y(s),\mathcal S^*(t-s)f\rangle^2|Q^{1/2}_BZ(s)|^2 \\
&=\langle\mathcal V(s)\mathcal S^*(t-s)f,\mathcal S^*(t-s)f\rangle|Q^{1/2}_BZ(s)|^2 \\
&=\langle\mathcal S(t-s)\mathcal V(s)\mathcal S^*(t-s)f,f\rangle|Q^{1/2}_BZ(s)|^2.
\end{align*}
The proof is complete.
\end{proof}
From the proposition we see that $\{X(t)\}_{t\geq 0}$ is a Gaussian process conditional on $\mathcal F_t^Y$, with mean $\mathcal S(t)X_0$. The 
covariance operator $Q_{X(t)}$ of $X(t)$, defined by the relationship 
\begin{equation}
\langle Q_{X(t)}f,g\rangle=\E\left[\langle X(t)-\E[X(t)],f\rangle\langle X(t)-\E[X(t)],g\rangle\right],
\end{equation}
for $f,g\in H$, can be computed as follows: since $X(t)-\E[X(t)]=\int_0^t\mathcal S(t-s)\Gamma_Z(s)\,dB(s)$, and for a fixed $T>0$, the process
$t\mapsto\int_0^t\mathcal S(T-s)\Gamma_Z(s)\,dB(s)\, t\leq T$  is an $\mathcal F_t$-martingale, it follows from 
Peszat and Zabczyk~\cite[Thm. 8.7 (iv)]{PZ},
\begin{align*}
 &\E\left[\langle\int_0^t\mathcal S(T-s)\Gamma_Z(s)\,dB(s),f\rangle\langle\int_0^t\mathcal S(T-s)\Gamma_Z(s)\,dB(s),g\rangle\right] \\
&\qquad\qquad=
\E\left[\left\langle\left(\int_0^t\mathcal S(T-s)\Gamma_Z(s)\,dB(s)\right)^{\x 2}f,g\right\rangle\right] \\
&\qquad\qquad=\E\left[\langle\int_0^t\mathcal S(T-s)\Gamma_Z(s)Q_B\Gamma_Z^*(s)\mathcal S^*(T-s)\,ds f,g\rangle\right] \\
&\qquad\qquad=\langle\int_0^t\mathcal S(T-s)\E\left[\Gamma_Z(s)Q_B\Gamma_Z^*(s)\right]\mathcal S^*(T-s)\,ds f,g\rangle,
\end{align*}
for $t\leq T$. 
Now, let $T=t$, and we find
\begin{equation}
Q_{X(t)}=\int_0^t\mathcal S(t-s)\E\left[\Gamma_Z(s)Q_B\Gamma_Z^*(s)\right]\mathcal S^*(t-s)\,ds.
\end{equation}
Note that
\begin{align*}
\Gamma_Z(s)Q_B\Gamma_Z^*(s)f&=\Gamma_Z(s)Q_B(\langle Y(s),f\rangle Z(s)) \\
&=\langle Y(s),f\rangle\Gamma_Z(s)(Q_B Z(s)) \\
&=\langle Y(s),f\rangle\langle Z(s),Q_B Z(s)\rangle Y(s) \\
&=|Q^{1/2}_BZ(s)|^2\langle Y(s),f\rangle Y(s) \\
&=|Q^{1/2}_BZ(s)|^2\mathcal V(s) f
\end{align*}
for any $f\in H$. Thus we recover the covariance functional that we can read off from Prop.~\ref{prop:ch-funct-OU};
\begin{equation}
\label{eq:covar-X(t)}
Q_{X(t)}=\int_0^t\mathcal S(t-s)\E\left[|Q^{1/2}_BZ(s)|^2\mathcal V(s)\right]\mathcal S^*(t-s)\,ds.
\end{equation} 
By Peszat and Zabczyk~\cite[Thm. 8.7 (iv)]{PZ} and the zero expectation of the stochastic integral with respect to $W$,
\begin{align*}
\E\left[\mathcal V(t) f,g\rangle\right]&=\E\left[\langle Y(t),f\rangle\langle Y(t),g\rangle\right] \\
&=\langle\mathcal U(t)Y_0,f\rangle\langle\mathcal U(t)Y_0,g\rangle+\E\left[\langle\int_0^t\mathcal U(t-s)\eta\,dW(s)^{\x2}f,g\rangle\right] \\
&=\langle(\mathcal U(t)Y_0)^{\x2}f,g\rangle+\langle\int_0^t\mathcal U(t-s)\eta Q_W\eta^*\mathcal U^*(t-s)\,ds f,g\rangle,
\end{align*}
for $f,g\in H$. Hence, in the particular case of $Z(t)=\gamma\in H$, with $|\gamma|=1$, we find that the covariance becomes
$$
Q_{X(t)}=\int_0^t\mathcal S(t-s)\left\{(\mathcal U(s)Y_0)^{\x2}+\int_0^s\mathcal U(u)\eta Q_W\eta^*\mathcal U^*(u)\,du\right\}\mathcal S^*(t-s)\,ds.
$$

We next apply our Ornstein-Uhlenbeck process $\{X(t)\}_{t\geq 0}$ with
tensor Heston stochastic volatility to the modelling of forward prices of commodity markets. 
For this purpose, we let $H$ be the Filipovic space $H_w$, which was introduced by Filipovic in \cite{filipovic}. For a measurable and increasing function $w:\mathbb{R_+\rightarrow\mathbb{R_+}}$ with $w(0)=1$ and $\int_0^{\infty} w^{-1}(x)dx<\infty$, the Filipovic space 
$H_w$ is defined as the space of absolutely continuous functions $f:\mathbb{R_+\rightarrow\mathbb{R}}$ such that
\[
|f|_w^2 := f(0)^2 + \int_0^\infty w(x)|f'(x)|^2dx <\infty.
\]
Here, $f'$ denotes the weak derivative of $f$. The space $H_w$ is a separable Hilbert space with inner product $\<\cdot,\cdot\>_w$ and associated norm $|\cdot|_w$.

We let $\{X(t)\}_{t\geq 0}$ be defined as in \eqref{X}, with $\mathcal C$ being the derivative operator $\partial/\partial x$. The derivative operator is densely defined on $H_w$ (see 
Filipovic~\cite{filipovic}), with the left-shift operator $\{\mathcal S(t)\}_{t\geq 0}$ being
its $C_0$-semigroup. For $f\in H_w$, the left-shift semigroup acts as $\mathcal{S}(t)f:= f(\cdot+t)\in H_w$.
Furthermore, we let $\delta_x:H_w\rightarrow\mathbb{R}$ be the evaluation functional, i.e. for $f\in H_w$ and $x\in\mathbb{R}_+$, $\delta_x(f):= f(x)$. We have that $\delta_x\in H_w^*$,
that is, the evaluation map is a linear functional on $H_w$. 
Letting $h_x\in H_w$ be given by
\[
 h_x(y) = 1 + \int_0^{x\wedge y} \frac{1}{w(x)}dx, \quad y\in\mathbb{R}_+,
\]
we have that $\delta_x = \<\cdot,h_x\>_w$ (see Benth and Kr\"uhner~\cite{BK-HJM}). 

The arbitrage-free price $F(t,T)$ at time $t$ of a contract delivering a commodity at a future time $T\geq t$, 
is modelled by $F(t,T):= \delta_{T-t}(X(t))=X(t,T-t)$ (see Benth and Kr\"uhner~\cite{BK-HJM}).
We adopt the Musiela notation and express the price in terms of {\it time to delivery}
$x\geq 0$ rather than {\it time of delivery} $T$, letting $f(t,x):= F(t,t+x) = \delta_x(X(t))=X(t,x)$. 
The next corollary gives the covariance between two contracts with different times to delivery. 
\begin{corollary}
For all $x,y\in\mathbb{R}_+$, we have
\begin{align*}
\textnormal{Cov}\big(f(t,x),\, f(t,y)\big)=\mathbb E\left[\int_0^t|Q_B^{1/2}Z(s)|_w^2Y(s,x+t-s)Y(s,y+t-s)\,ds\right],
\end{align*}
where $Y(s,z)=\delta_z(Y(s))$ for $z\in\mathbb R_+$.
%\begin{align*}
%\textnormal{Cov}\big(f(t,x),\, f(t,y)\big) =& \textnormal{Tr}(Q_B)\lVert Q_B e_1\rVert_w^{1/2}\bigg(\int_0^t \delta_{y+t-s} (\mathcal{U}(s)Y_0)^{\x2} \delta_{x+t-s}^*(1)\,ds\\
% &+ \textnormal{Tr}(Q_W)\int_0^t\int_0^s \delta_{y+t-s}\mathcal{U}(s-u) Q_W \mathcal{U}(s-u)^* \delta_{x+t-s}^*(1)\,du\,ds\bigg).\\
%\end{align*}
\end{corollary}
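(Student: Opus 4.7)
The plan is to translate the covariance between the two real-valued random variables $f(t,x)$ and $f(t,y)$ into the covariance operator $Q_{X(t)}$ acting on the appropriate evaluation kernels $h_x,h_y\in H_w$, and then exploit the tensor structure of $\mathcal V(s)=Y(s)^{\otimes 2}$ to collapse the resulting operator-valued expression down to a pointwise product of $Y$.

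First I would rewrite $f(t,x)=\delta_x(X(t))=\langle X(t),h_x\rangle_w$ and similarly for $f(t,y)$. Since $X(t)$ has conditional mean $\mathcal S(t)X_0$ (which is deterministic), the covariance is
\[
\textnormal{Cov}(f(t,x),f(t,y))=\langle Q_{X(t)}h_x,h_y\rangle_w,
\]
and the formula \eqref{eq:covar-X(t)} for $Q_{X(t)}$ expresses this as a Bochner integral. Since $|Q_B^{1/2}Z(s)|_w^2$ is a scalar random variable, I can pull it inside the inner product and move $\mathcal S(t-s)$ across by adjoint to obtain
\[
\langle Q_{X(t)}h_x,h_y\rangle_w=\int_0^t\mathbb E\!\left[|Q_B^{1/2}Z(s)|_w^2\,\langle\mathcal V(s)\mathcal S^*(t-s)h_x,\mathcal S^*(t-s)h_y\rangle_w\right]ds.
\]

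The next step uses the tensor identity $\mathcal V(s)=Y(s)^{\otimes 2}$, which gives $\langle\mathcal V(s)g_1,g_2\rangle_w=\langle Y(s),g_1\rangle_w\langle Y(s),g_2\rangle_w$ for any $g_1,g_2\in H_w$. Applied with $g_1=\mathcal S^*(t-s)h_x$ and $g_2=\mathcal S^*(t-s)h_y$ and moving $\mathcal S^*(t-s)$ back across, the two inner products become $\langle\mathcal S(t-s)Y(s),h_x\rangle_w=\delta_x(\mathcal S(t-s)Y(s))=Y(s,x+t-s)$ by the shift property of the semigroup on $H_w$, and similarly for $y$. Finally Fubini lets me exchange expectation and the $ds$-integral, giving the claimed identity.

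The main work is really bookkeeping: keeping track of adjoints of the shift semigroup and verifying the use of Fubini. The adjoint step is harmless because $\mathcal S(t-s)$ only ever appears under an inner product, and the integrability needed for Fubini is supplied by the second-moment bounds established in Lemma~\ref{lem:finite-integral-norm} combined with Cauchy--Schwarz on the product $Y(s,x+t-s)Y(s,y+t-s)$, whose $L^2$ norms are controlled by $\mathbb E[|Y(s)|_w^2]$ since $|\delta_z|_{H_w^*}=|h_z|_w$ is finite and bounded for $z$ in bounded intervals.
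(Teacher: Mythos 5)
Your proposal is correct and follows essentially the same route as the paper's proof: expressing $\textnormal{Cov}(f(t,x),f(t,y))=\langle Q_{X(t)}h_x,h_y\rangle_w$ via \eqref{eq:covar-X(t)} and collapsing $\langle\mathcal V(s)\mathcal S^*(t-s)h_x,\mathcal S^*(t-s)h_y\rangle_w$ to $Y(s,x+t-s)Y(s,y+t-s)$ using the tensor structure and the shift semigroup (the paper phrases this via $\mathcal S^*(t)h_x=h_{x+t}$, which is the same computation). Your explicit Fubini/integrability check is a harmless addition the paper leaves implicit.
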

\begin{proof}
Since $f(t,x)=\delta_x(X(t))=\langle X(t),h_x\rangle_w$, we find
$$
\textnormal{Cov}(f(t,x),f(t,y))=\textnormal{Cov}(\langle X(t),h_x\rangle_w,\langle X(t),h_y\rangle_w)=\langle Q_{X(t)}h_x,h_y\rangle_w,
$$
with $Q_{X(t)}$ given in \eqref{eq:covar-X(t)}. Since $\mathcal S^*(t)h_x=h_{x+t}$, it follows,
\begin{align*}
\langle\mathcal S(t-s)\mathcal V(s)\mathcal S^*(t-s) h_x,h_y\rangle_w&=\langle \mathcal S(t-s)\mathcal V(s)h_{x+t-s},h_y\rangle_w \\
&=\langle Y(s),h_{x+t-s}\rangle_w\langle\mathcal S(t-s)Y(s),h_y\rangle_w \\
&=\langle Y(s),h_{x+t-s}\rangle_w\langle Y(s),\mathcal S^*(t-s)h_y\rangle_w \\
&=Y(s,x+t-s)Y(s,y+t-s).
\end{align*}
The claim follows.
\end{proof}
The above corollary yields that the entire covariance structure between contracts with different times of maturity is determined by $Y$ only. We notice that the choice
of $\{Z(t)\}_{t\geq 0}$ in the definition of $\{\Gamma_Z(t)\}_{t\geq 0}$ only scales the covariance. Consider the special case of $Z(t)=\gamma\in H_w$. Using 
similar arguments to those in the derivation of $Q_{X(t)}$ yield,
\begin{align*}
&\mathbb E\left[Y(s,x+t-s)Y(s,y+t-s)\right] \\
&\qquad=\langle\mathcal U(s) Y_0,h_{x+t-s}\rangle_w\langle\mathcal U(s)Y_0,h_{y+t-s}\rangle_w \\
&\qquad\qquad+
\mathbb E\left[\langle\int_0^s\mathcal U(s-u)\eta\,dW(u),h_{x+t-s}\rangle_w\langle\int_0^s\mathcal U(s-u)\eta\,dW(u),h_{y+t-s}\rangle_w\right] \\
&\qquad=\langle\mathcal U(s) Y_0,h_{x+t-s}\rangle_w\langle\mathcal U(s)Y_0,h_{y+t-s}\rangle_w %\\
%&\qquad\qquad
+\mathbb E\left[\langle\int_0^s\mathcal U(s-u)\eta\,dW(u)^{\otimes 2}h_{x+t-s},h_{y+t-s}\rangle_w\right] \\
&\qquad=\langle\mathcal U(s) Y_0,h_{x+t-s}\rangle_w\langle\mathcal U(s)Y_0,h_{y+t-s}\rangle_w
+\langle\int_0^s\mathcal U(u)\eta Q_W\eta^*\mathcal U^*(u)\,du h_{x+t-s},h_{y+t-s}\rangle_w.
\end{align*}
Thus, when $Z(t)=\gamma\in H_w$, we find the covariance to be
\begin{align*}
\textnormal{Cov}(f(t,x),f(t,y))&=|Q_B^{1/2}\gamma|_w^2\int_0^t\delta_{y+t-s}(\mathcal U(s)Y_0)^{\otimes 2}\delta_{x+t-s}^*(1)\,ds \\
&\qquad+|Q_B^{1/2}\gamma|_w^2\int_0^t\delta_{y+t-s}\int_0^s\mathcal U(u)\eta Q_W\eta^*\mathcal U^*(u)\,du\,\delta^*_{x+t-s}(1)\,ds,
\end{align*}
since $\delta_x^*(1)=h_x$. The covariance of $f(t,x)$ and $f(t,y)$ is determined by the parameters of the $Y$-process, more specifically, its volatility
$\eta$, the operator $\mathcal A$ (yielding a semigroup $\mathcal U$), the initial field $Y_0$ and the covariance operator $Q_W$ of the Wiener noise $W$ driving its dynamics. We also observe that the time integrals sample the parameters of $Y$ over the intervals $(x,x+t)$ and $(y,y+t)$ to build up the covariance of the
field $\{f(t,z)\}_{z\in\mathbb R_+}$, not only taking the values at $x$ and $y$ into account.

\section{The case when $\mathcal{A}$ is bounded} 

In this section, we analyse the tensor Heston stochastic variance process when 
$\mathcal{A}$ in \eqref{Y} is a bounded operator. Moreover, we make comparison with the 
classical Heston model on the real line (see Heston~\cite{H}) and discuss its 
extensions.

When $\mathcal A$ is bounded, \eqref{Y} has a strong solution and we can compute the dynamics of $\V(t)$ by an infinite dimensional version of It\^o's formula.
%$\V(t)$ is then projected down to $\mathbb{R}$ by applying a linear functional.
%The resulting process is compared to the one-dimensional Heston stochastic variance process.
\begin{proposition} \label{prop:V_dynamics-Abounded}
Assume $\mathcal{A}$ is bounded. Then we have the following representation of $\V(t)$,
\begin{equation*} 
  \V(t) = \V(0) + \int_0^t \Phi(s) ds + \int_0^t \Psi(s) dW(s), t\geq 0,
\end{equation*}
where $\{\Phi(t)\}_{t\geq 0}$ is the $\mathcal H$-valued process
\begin{align*}
 \Phi(s) &= \mathcal{A}Y(s)\x Y(s) + Y(s)\x \mathcal{A}Y(s)+\eta Q_W\eta^*,
\end{align*}
and $\{\Psi(t)\}_{t\geq 0}$ is the $L(H,\mathcal H)$-valued process
\begin{align*}
 \Psi(s)(\cdot) &= \eta(\cdot)\x Y(s) + Y(s)\x\eta(\cdot).
\end{align*}
%and $\{e_i\}_{i\geq0}$ is an orthonormal basis of $H$ consisting of eigenvectors of $Q_W$, and $\{\lambda_i\}_{i\geq0}$ are the corresponding eigenvalues.
\end{proposition}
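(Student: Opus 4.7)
The approach is to apply the Hilbert-space It\^o formula to the composition of $Y$ with the smooth map $F:H\to\mathcal H$ defined by $F(y)=y^{\otimes 2}$. Since $\mathcal A$ is bounded, \eqref{Y} admits a strong solution for which $\mathcal A Y(\cdot)$ is Bochner integrable on $[0,t]$ a.s., so every object appearing below is well defined.

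I would first compute the Fr\'echet derivatives of $F$ by direct expansion. From $F(y+h)-F(y)=y\otimes h+h\otimes y+h^{\otimes 2}$ together with $\|h^{\otimes 2}\|=|h|^2$, it follows that $DF(y)(h)=h\otimes y+y\otimes h$, which is linear and bounded in $h$ with values in $\mathcal H$. A further expansion yields $D^2F(y)(h,k)=h\otimes k+k\otimes h$. Both derivatives are polynomial in $y$, continuous, and bounded on bounded subsets of $H$, which is the standard regularity needed for the It\^o formula in Hilbert space; see, e.g., Peszat and Zabczyk~\cite{PZ}.

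Next I would insert $dY(s)=\mathcal A Y(s)\,ds+\eta\,dW(s)$ into It\^o's formula applied to $\mathcal V(t)=F(Y(t))$. The drift term $DF(Y(s))(\mathcal A Y(s))\,ds$ produces $\mathcal A Y(s)\otimes Y(s)+Y(s)\otimes \mathcal A Y(s)$, and the stochastic term $DF(Y(s))(\eta\,\cdot)\,dW(s)$ is exactly $\Psi(s)\,dW(s)$ as stated. For the quadratic variation correction, I pick any ONB $\{e_n\}$ of $H$; since $D^2F(y)(h,h)=2\,h\otimes h$, the $\tfrac12$ in It\^o's formula cancels the symmetry factor and leaves the series $\sum_{n=1}^{\infty}\eta Q_W^{1/2}e_n\otimes \eta Q_W^{1/2}e_n$. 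Using $f\otimes g=\langle f,\cdot\rangle g$ and Parseval's identity, a short computation identifies this sum with $\eta Q_W\eta^*\in\mathcal H$. Collecting the three contributions gives $\Phi(s)\,ds+\Psi(s)\,dW(s)$ and proves the proposition.

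The main obstacle is bookkeeping rather than substance: one must verify that $F=(\cdot)^{\otimes 2}$ meets the smoothness hypotheses of the infinite-dimensional It\^o formula (in particular that the trace term is a well-defined element of $\mathcal H$, which uses $Q_W$ being trace class together with $\eta$ being bounded), and check that the trace simplifies precisely to $\eta Q_W\eta^*$ with no stray factor of $2$. Once these verifications are in hand, the decomposition is exactly what It\^o's formula produces.
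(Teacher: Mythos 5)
Your proposal is correct and follows essentially the same route as the paper: compute the first and second Fr\'echet derivatives of $y\mapsto y^{\otimes 2}$ by direct expansion, apply the infinite-dimensional It\^o formula to the strong solution of \eqref{Y} (available since $\mathcal A$ is bounded), and identify the trace correction with $\eta Q_W\eta^*$. The only cosmetic difference is that you express the correction term through an arbitrary ONB with $Q_W^{1/2}$ absorbed into the test vectors, whereas the paper uses the eigenbasis of $Q_W$ and then verifies $\sum_n\lambda_n\eta(e_n)^{\otimes 2}=\eta Q_W\eta^*$ directly; both computations are equivalent.
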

\begin{proof}
When $\mathcal{A}$ is bounded, the unique strong solution of \eqref{Y} is given by
\[
 Y(t) = Y_0 + \int_0^t \mathcal{A}Y(s)\,ds + \int_0^t \eta\, dW(s). 
\]
Define the function $v:H\rightarrow \mathcal{H}$ by $v(y):= y^{\x2}$ and observe that
$\V(t)=v(Y(t))$. To use the infinite dimensional It\^o formula by Curtain and Falb~\cite{CF}, 
we need to find the first and second order Frech\'et derivatives of $v$.
Define the functions $g_1:H\rightarrow L(H,\mathcal{H})$ and $g_2:H\rightarrow L(H,L(H,\mathcal{H}))$ by 
\begin{align*}
  g_1(y)& := \cdot\x y + y\x\cdot 
\end{align*}
and
\begin{align*}
  g_2(y)(h) &= h \x \cdot + \cdot \x h.
\end{align*}
A direct calculation reveals that,
\begin{align*}
 \| v(y+h) - v(y) - g_1(y)h \|_{\mathcal{H}} &= \| (y+h) \x (y+h) - y\x y - (h\x y + y\x h) \| \\
 &= \| y^{\x2} + y\x h + h\x y + h^{\x2} - y^{\x2} - h\x y - y\x h \| \\
 &= \| h^{\x2} \| \\
% &= \<h^{\x2}, h^{\x2}\>_{\mathcal{H}}^{1/2}\\
% &= \<h, (h^{\x2})h\>_H^{1/2}\\
% &\leq \|h\|_H^{1/2} \|(h^{\x2})h\|_H^{1/2} \\
% &= \|h\|_H^{1/2} \|\<h,h\>_H h\|_H^{1/2} \\
% &= \|h\|_H^{1/2} \|\<h,h\>_H\|_H^{1/2} \|h\|_H^{1/2} \\
% &= \|h\|_H^{1/2} \|h\|_H \|h\|_H^{1/2} \\
 &= |h|^2.
\end{align*}
Thus, we find,
\[
 \frac{\| v(y+h) - v(y) - g_1(y)h \|}{|h|} \leq \frac{|h|^2}{|h|} =|h|, 
\]
which converges to 0 when $|h|\rightarrow 0$. 
This shows that $g_1$ is the Frech\'et derivative of $v$, which we denote by $Dv$.
Next, for any $\xi\in H$, 
\begin{align*}
Dv(y+h)(\xi)&-Dv(y)(\xi)-g_2(y)(h)(\xi) \\
&=\xi\x (y+h)+(y+h)\x\xi-\xi\x y-y\x\xi-h\x\xi-\xi\x h=0,
\end{align*}
which shows that $g_2$ is the 
%We also have that
%\begin{align*}
%& \| Dv(y+h)-Dv(y)-g_2(y)(h)\|_{\text{op}}\\
%&= \| \cdot \x (y+h)+(y+h)\x\cdot -\cdot\x y - y\x\cdot -h\x\cdot - \cdot\x h\|_{\text{op}}\\
%&= \| \cdot \x y + \cdot\x h + y\x\cdot + h\x\cdot - \cdot\x y - y\x\cdot - h\x\cdot - \cdot\x h \|_{\text{op}}\\
%&= 0,
%\end{align*}
%so
% \[
%  \frac{\| Dv(y+h)-Dv(y)-g_2(y)(h)\|_{\text{op}}}{\|h\|_H} = 0.\\
% \]
%This shows that $g_2$ is the 
Frech\'et derivative of $Dv$, and hence the second order Frech\'et derivative of $v$, which we denote by $D^2 v$.
It follows from the infinite dimensional It\^o formula in Curtain and Falb~\cite{CF} that
\begin{align*}
 d\V(t) &= \left(Dv(Y(t))(\mathcal{A}Y(t)) + \frac{1}{2} \sum_{n=0}^{\infty} D^2 v(Y(t))(\eta(\sqrt{\lambda_n} e_n))(\eta(\sqrt{\lambda_n} e_n))\right) dt + Dv(Y(t))\eta\,dW(t), 
\end{align*}
where $\{e_n\}_{n\in\mathbb N}$ is an ONB of $H$ of eigenvectors of $Q_W$ 
with corresponding eigenvalues $\{\lambda_n\}_{n\in\mathbb N}$.
Inserting $g_1(Y(t))$ and $g_2(Y(t))$ for respectively $Dv(Y(t))$ and $D^2 v(Y(t))$, gives us
\begin{align*}
 d\V(t) =& \left(\mathcal{A}Y(t)\x Y(t) + Y(t)\x \mathcal{A}Y(t)\right)\,dt \\
&\qquad + \frac{1}{2} \sum_{n=0}^{\infty} \Big(\eta\big(\sqrt{\lambda_n} e_n\big)\x\eta\big(\sqrt{\lambda_n} e_n\big) + \eta\big(\sqrt{\lambda_n} e_n\big)\x\eta\big(\sqrt{\lambda_n} e_n\big)\Big)\,dt \\
 &\qquad+ \Big(\eta(\cdot)\x Y(t) + Y(t)\x\eta(\cdot)\Big) dW(t) \\
 =& \left(\mathcal{A}Y(t)\x Y(t) + Y(t)\x \mathcal{A}Y(t) + \sum_{n=0}^{\infty} \lambda_n\eta(e_n)^{\x2}\right)dt + \Psi(t) dW(t). 
\end{align*}
For $\xi\in H$, we find,
\begin{align*}
\eta Q_W\eta^*(\xi)&=\eta Q_W\sum_{n=1}^{\infty}\langle\eta^*(\xi),e_n\rangle e_n \\
&=\sum_{n=1}^{\infty}\langle \xi,\eta(e_n)\rangle\eta Q_W(e_n) \\
&=\sum_{n=1}^{\infty}\lambda_n\langle \xi,\eta(e_n)\rangle\eta(e_n) \\
&=\sum_{n=1}^{\infty}\lambda_n\eta(e_n)^{\x 2}(\xi).
\end{align*}
The proof is complete.
\end{proof}
We can formulate the dynamics of $\{\mathcal V(t)\}_{t\geq 0}$ as an operator-valued
stochastic differential equation.
\begin{proposition}
\label{prop:V-dynamics}
Assume that $\mathcal A$ is bounded. Then
\begin{align*}
d\mathcal V(t)&=\left(\mathcal A\mathcal V(t)\mathcal A^*+\mathcal V(t)-(\mathcal A-\text{Id})\mathcal V(t)(\mathcal A^*-\text{Id})+\eta Q_W\eta^*\right)\,dt \\
&\qquad+|\eta(\cdot)|\left(\Gamma_{\eta(\cdot)/|\eta(\cdot)|}(t)+\Gamma^*_{\eta(\cdot)/|\eta(\cdot)|}(t)\right)\,dW(t)
\end{align*}
where $\text{Id}$ is the identity operator on $H$ and $\Gamma_Z(t)$ is the Cholesky decomposition of $\mathcal V(t)$ defined in \eqref{def:gammaZ}. 
\end{proposition}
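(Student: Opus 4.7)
The plan is to start from Proposition~\ref{prop:V_dynamics-Abounded}, which already gives us $d\mathcal V(t)=\Phi(t)\,dt+\Psi(t)\,dW(t)$ with $\Phi(t)=\mathcal AY(t)\x Y(t)+Y(t)\x\mathcal AY(t)+\eta Q_W\eta^*$ and $\Psi(t)(\xi)=\eta(\xi)\x Y(t)+Y(t)\x\eta(\xi)$, and simply rewrite these two expressions in the form asserted. The entire proof is a pair of tensor-product identities, with no analytic difficulty.

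For the drift, I would first observe the fundamental identity $(Af)\x(Bg)=A(f\x g)B^*$ applied to rank-one operators; in particular, for any bounded $A$ and any $h\in H$,
\[
A h^{\x 2}A^*=(Ah)^{\x 2}.
\]
Applied with $A=\mathcal A$ and $h=Y(t)$, this gives $\mathcal A\mathcal V(t)\mathcal A^*=(\mathcal AY(t))^{\x 2}$, and with $A=\mathcal A-\text{Id}$ we get $(\mathcal A-\text{Id})\mathcal V(t)(\mathcal A^*-\text{Id})=(\mathcal AY(t)-Y(t))^{\x 2}$. Expanding the latter bilinearly in the tensor slot yields
\[
(\mathcal AY(t)-Y(t))^{\x 2}=(\mathcal AY(t))^{\x 2}-\mathcal AY(t)\x Y(t)-Y(t)\x\mathcal AY(t)+\mathcal V(t),
\]
so
\[
\mathcal A\mathcal V(t)\mathcal A^*+\mathcal V(t)-(\mathcal A-\text{Id})\mathcal V(t)(\mathcal A^*-\text{Id})=\mathcal AY(t)\x Y(t)+Y(t)\x\mathcal AY(t),
\]
which is precisely $\Phi(t)-\eta Q_W\eta^*$. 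Adding the $\eta Q_W\eta^*$ term recovers the drift in the statement.

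For the diffusion, recall from \eqref{def:gammaZ} and Proposition~\ref{prop:tensor-heston-cholesky} that $\Gamma_Z(t)=Z(t)\x Y(t)$ and $\Gamma_Z^*(t)=Y(t)\x Z(t)$. Since the tensor product is linear in each slot, for any nonzero vector $h\in H$ we have
\[
|h|\,\Gamma_{h/|h|}(t)=|h|\left(\frac{h}{|h|}\x Y(t)\right)=h\x Y(t),\qquad |h|\,\Gamma^*_{h/|h|}(t)=Y(t)\x h.
\]
Substituting $h=\eta(\xi)$ (for a generic direction $\xi\in H$ representing the ``input'' of the $L(H,\mathcal H)$-valued integrand) gives $\Psi(t)(\xi)=\eta(\xi)\x Y(t)+Y(t)\x\eta(\xi)=|\eta(\xi)|\bigl(\Gamma_{\eta(\xi)/|\eta(\xi)|}(t)+\Gamma^*_{\eta(\xi)/|\eta(\xi)|}(t)\bigr)$, which is exactly the diffusion term in the statement interpreted slot-by-slot.

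The only mildly delicate point is notational: the $\eta(\cdot)/|\eta(\cdot)|$ appearing inside $\Gamma$ is a generic unit vector obtained by normalizing $\eta$ applied to the noise direction, and the prefactor $|\eta(\cdot)|$ compensates. On the null set where $\eta(\xi)=0$, both the left- and right-hand sides vanish, so the identity extends without difficulty by the convention $0/0=1$ already adopted after Lemma~\ref{lemma:gamma-prop}. No genuine obstacle appears; the proof reduces to these two bilinear expansions and the substitution of the starting dynamics from Proposition~\ref{prop:V_dynamics-Abounded}.
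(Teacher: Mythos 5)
Your proof is correct and follows essentially the same route as the paper's: both reduce the drift identity to the bilinear expansion of $((\mathcal A-\text{Id})Y(t))^{\x 2}$ together with the conjugation identity $(\mathcal L y)^{\x 2}=\mathcal L y^{\x 2}\mathcal L^*$, and both obtain the diffusion term by normalizing $\eta(\xi)$ inside $\Gamma$ slot-by-slot (with the zero case trivially handled by the vanishing prefactor). One small remark: with the paper's convention $f\x g=\<f,\cdot\,\>g$, the general identity reads $(Af)\x(Bg)=B(f\x g)A^*$ rather than $A(f\x g)B^*$, but the special case $A=B$ that you actually invoke is unaffected.
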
  
\begin{proof}
For $y,f\in H$, we see from a direct computation that
\begin{align*}
((\mathcal A-\text{Id})y)^{\x 2}(f)&=\langle(\mathcal A-\text{Id})y,f\rangle(\mathcal A-\text{Id})y \\
&=\langle\mathcal A y,f\rangle\mathcal A y-\langle\mathcal A y,f\rangle y-\langle y,f\rangle
\mathcal A y +\langle y,f\rangle y \\
&=(\mathcal A y)^{\x 2}(f)-(\mathcal A y\x y)(f)-(y\x\mathcal A y)(f)+y^{\x 2}(f),
\end{align*}
or,
$$
\mathcal A y\x y+y\x\mathcal A y=(\mathcal A y)^{\x 2}+y^{\x 2}-((\mathcal A-\text{Id})y)^{\x 2}.
$$
Next, for any bounded operator $\mathcal L\in L(H)$, we have from linearity of
$\mathcal L$ that
\begin{align*}
(\mathcal L y)^{\x 2}(f)&=\langle\mathcal L  y,f\rangle\mathcal L y %\\
=\langle y,\mathcal L^*f\rangle\mathcal L y %\\
=\mathcal L(\langle y,\mathcal L^*f\rangle y) %\\
=\mathcal L(y^{\x 2}(\mathcal L^*f)) %\\
=\mathcal L y^{\x 2}\mathcal L^*(f).
\end{align*}
Thus,
$$
\mathcal A y\x y+y\x\mathcal A y=\mathcal A y^{\x 2}\mathcal A^*+y^{\x 2}
-(\mathcal A-\text{Id}) y^{\x 2} (\mathcal A^*-\text{Id}).
$$
With $y=Y(t)$ and recalling the definition of $\Phi(t)$ in 
Prop.~\ref{prop:V_dynamics-Abounded}, this shows the drift of $\mathcal V(t)$.

For $\xi,f\in H$, it holds that
$$
\Psi(t)(f)(\xi)=|\eta(f)|\left(\frac{\eta(f)}{|\eta(f)|}\x Y(t)+Y(t)\x\frac{\eta(f)}{|\eta(f)|}\right)(\xi)
$$ 
whenever $\eta(f)\neq 0$, with $\Psi(t)$ defined in Prop.~\ref{prop:V_dynamics-Abounded}.
The result follows.
\end{proof}
Recall from Lemma~\ref{lemma:gamma-prop} that $\mathcal V(t)=\Gamma_Z(t)\Gamma^*_Z(t)$. Hence, for any
$f\in H$, 
$$
\Gamma_{\eta(\cdot)/|\eta(\cdot)|}(t)\Gamma^*_{\eta(\cdot)/|\eta(\cdot)|}(t)(f)=
\Gamma_{\eta(f)/|\eta(f)|}(t)\Gamma^*_{\eta(f)/|\eta(f)|}(t)=\mathcal V(t)(f).
$$
Informally, we can say that the diffusion term of $\{\mathcal V(t)\}_{t\geq 0}$ is given as
the sum of the "square root" of $\{\mathcal V(t)\}_{t\geq 0}$ and its adjoint.

Let us consider our tensor Heston stochastic variance process in the particular case of finite dimensions, that is, $H=\R^d$ for $d\in\mathbb N$. 
We assume $\{W(t)\}_{t\geq 0}$ is a $d$-dimensional standard Brownian motion, 
and the $d$-dimensional stochastic process $\{Y(t)\}_{t\geq 0}$ is defined by the dynamics \eqref{Y} with 
$\mathcal A,\eta\in \R^{d\times d}$. It is straightforward to see that 
for any $x,y\in \R^d$, $x\x y=xy^{\top}$, where $y^{\top}$ means the transpose of $y$. Hence, $\mathcal V(t)=Y^{\x 2}(t)=Y(t)Y^{\top}(t)$. Moreover, if $x\in \R^d$, 
$$
\Psi(t)(x)=(\eta x)\x Y(t)+Y(t)\x(\eta x)=\eta xY^{\top}(t)+Y(t) x^{\top}\eta^{\top},
$$
and
$$
\mathcal A Y(t)\x Y(t)+Y(t)\x\mathcal A Y(t)=\mathcal A Y(t) Y^{\top}(t)+Y(t)(\mathcal A Y(t))^{\top}=\mathcal A\mathcal V(t)+\mathcal V(t)\mathcal A^{\top}.
$$
Hence, since $Q_W=I$, the $d\times d$ identity matrix, we find from Prop.~\ref{prop:V_dynamics-Abounded} that 
\begin{equation}
\label{finite-dim-tensor-Heston}
d\mathcal V(t)=\left(\eta\eta^{\top}+\mathcal A\mathcal V(t)+\mathcal V(t)\mathcal A^{\top}\right)\,dt+\eta\,dW(t)\, Y^{\top}(t)+Y(t)\,dW^{\top}(t)\,\eta^{\top}.
\end{equation}
This is a different dynamics than the Wishart processes on $\R^{d\times d}$ defined by Bru~\cite{Bru},
and proposed as a multifactor extension of the Heston stochastic volatility model in
Fonseca, Grasselli and Tebaldi~\cite{FGT}. The drift term in the Wishart process 
is analogous to the one in \eqref{finite-dim-tensor-Heston}, while the diffusion term in the Wishart process takes the form
$$
R\,d\bar{W}(t)\,\mathcal V^{1/2}(t)+\mathcal V^{1/2}(t)\,d\bar{W}^{\top}(t)\,R^{\top},
$$
where $\{\bar{W}(t)\}_{t\geq 0}$ is a $d\times d$ matrix-valued Brownian motion and $R$
is a $d\times d$ matrix. Our tensor model in infinite dimensions yields a simplified diffusion in
finite dimensions compared to the Wishart process of Bru~\cite{Bru}, where one is using
a Cholesky-type of representation of the square root of $\mathcal V(t)$, involving also
the "volatility" $\eta$ of the Ornstein-Uhlenbeck dynamics of $Y$.  To ensure 
a positive definite process, Bru~\cite{Bru} introduces strong conditions on
$\mathcal A$ and $R$, while our Heston model is positive definite by construction. 

Let us now slightly turn the perspective, going back to the general infinite dimensional
situation, and study the projection of the $\mathcal H$-valued process 
$\{\mathcal V(t)\}_{t\geq 0}$ to the real line in the sense of studying the process 
$\{\mathcal V(t)\}_{t\geq 0}$ expanded along a given element $f\in H$. 

To this end, for $f\in H$ introduce the linear functions $\mathcal L_f:\mathcal H\rightarrow\R$ 
by 
\begin{align*}
 \mathcal{L}_f(\mathcal{T}) := \<\<\mathcal{T},f^{\x2}\>\> &= \<\mathcal{T}(f),f\>.
\end{align*}
We note that for $h, g\in H$,
\begin{align} \label{proj_tensor}
 \mathcal{L}_f(h\x g) = \<\<h\x g, f^{\x2}\>\>= \<(h\x g)f, f\> = \<h,f\> \<g,f\>,
\end{align}
and, in particular, $\mathcal{L}_f(h^{\x2}) = \<h,f\>^2$. We define the real-valued
stochastic process $\{V(t;f)\}_{t\geq 0}$ as
\begin{equation}
\label{eq:dyn-projected-V}
V(t;f):=\mathcal L_f(\mathcal V(t))=\<Y(t),f\>^2,
\end{equation}
for $t\geq 0$. It is immediate from the definition that $\{V(t;f)\}_{t\geq 0}$ is
an $\mathcal F_t$-adapted process taking values on $\R_+$, the positive real line 
(including zero).
\begin{proposition}
\label{prop:V-dyn-realline}
Assume that $\mathcal{A}$ is bounded. Then the dynamics of $\{V(t;f)\}_{t\geq 0}$ defined
in \eqref{eq:dyn-projected-V} is 
\begin{align*}
dV(t;f)&=\left(V(t;f)+|Q^{1/2}_W\eta^*f|^2+\mathcal L_{\mathcal A^*f}(\V(t))-
\mathcal L_{(\mathcal A^*-\text{Id})f}(\V(t))\right)\,dt\\
 &\quad + 2|Q_W^{1/2} \eta^*f|\sqrt{V(t;f)}\, dw(t),
t\geq 0,
\end{align*}
where $w(t)$ is a real-valued Wiener process.
\end{proposition}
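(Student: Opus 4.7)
The plan is to reduce the problem to real-valued Itô calculus by observing that $V(t;f)=\langle Y(t),f\rangle^2$ is the square of a scalar semimartingale, then rewrite the resulting drift in the $\mathcal{L}_f$-notation and the diffusion via Lévy's characterisation.

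First I would introduce $\psi(t):=\langle Y(t),f\rangle$. Since $\mathcal A$ is bounded, \eqref{Y} admits the strong solution used in the proof of Prop.~\ref{prop:V_dynamics-Abounded}, and pairing that equation with $f\in H$ gives the real-valued Itô decomposition
\begin{equation*}
d\psi(t)=\langle Y(t),\mathcal A^* f\rangle\,dt+\langle \eta^* f,dW(t)\rangle,
\end{equation*}
whose quadratic variation is $d\langle\psi\rangle(t)=|Q_W^{1/2}\eta^* f|^2\,dt$ (this is the standard scalar Itô isometry for a Wiener process with covariance $Q_W$). Applying the one-dimensional Itô formula to $V(t;f)=\psi(t)^2$ then yields
\begin{equation*}
dV(t;f)=\Big(2\langle Y(t),f\rangle\langle Y(t),\mathcal A^* f\rangle+|Q_W^{1/2}\eta^* f|^2\Big)\,dt+2\langle Y(t),f\rangle\langle \eta^* f,dW(t)\rangle.
\end{equation*}

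Next I would rewrite the drift using the functional $\mathcal L_f$. By the polarisation identity
\begin{equation*}
2\langle Y(t),f\rangle\langle Y(t),\mathcal A^* f\rangle=\langle Y(t),\mathcal A^* f\rangle^2+\langle Y(t),f\rangle^2-\langle Y(t),(\mathcal A^*-\text{Id})f\rangle^2,
\end{equation*}
and recognising each squared pairing as $\mathcal L_{g}(\V(t))=\langle Y(t),g\rangle^2$ with $g=\mathcal A^* f$, $g=f$, and $g=(\mathcal A^*-\text{Id})f$ respectively, the drift takes precisely the form
\begin{equation*}
V(t;f)+|Q_W^{1/2}\eta^* f|^2+\mathcal L_{\mathcal A^* f}(\V(t))-\mathcal L_{(\mathcal A^*-\text{Id})f}(\V(t))
\end{equation*}
claimed in the proposition.

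For the diffusion, the scalar martingale $M(t):=2\int_0^t\langle Y(s),f\rangle\langle \eta^* f,dW(s)\rangle$ has quadratic variation
\begin{equation*}
\langle M\rangle(t)=4\int_0^t\langle Y(s),f\rangle^2\,|Q_W^{1/2}\eta^* f|^2\,ds=\int_0^t\Big(2|Q_W^{1/2}\eta^* f|\sqrt{V(s;f)}\Big)^2\,ds,
\end{equation*}
so by Lévy's characterisation (invoked on a possibly enlarged probability space if the integrand vanishes on a set of positive measure) there exists a real-valued Wiener process $w$ such that $dM(t)=2|Q_W^{1/2}\eta^* f|\sqrt{V(t;f)}\,dw(t)$, yielding the stated SDE. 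The only delicate point is this last step: one must be careful when $|Q_W^{1/2}\eta^* f|=0$ (in which case the diffusion is trivially zero) or when $V(t;f)=0$ on a null set, and standard martingale-representation arguments handle this without difficulty.
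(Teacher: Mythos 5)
Your proof is correct, but it follows a genuinely different route from the paper. The paper first derives the operator-valued dynamics of $\V(t)$ in $\mathcal H$ (Props.~\ref{prop:V_dynamics-Abounded} and \ref{prop:V-dynamics}, via the infinite-dimensional It\^o formula of Curtain--Falb applied to $y\mapsto y^{\x 2}$), then applies the functional $\mathcal L_f$ term by term, and finally invokes Thm.~2.1 of Benth--Kr\"uhner to represent the projected $\mathcal H$-valued stochastic integral $\mathcal L_f(\int_0^t\Psi(s)\,dW(s))$ as $\int_0^t\sigma(s;f)\,dw(s)$ with $\sigma(t;f)=|Q_W^{1/2}\gamma(t;f)|$ and $\gamma(t;f)=2\langle Y(t),f\rangle\eta^*f$. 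You bypass the operator level entirely: you pair the strong solution with $f$, apply the one-dimensional It\^o formula to $\psi(t)^2=\langle Y(t),f\rangle^2$, and use the scalar polarisation $2ab=a^2+b^2-(a-b)^2$ to produce exactly the drift terms $\mathcal L_{\mathcal A^*f}(\V(t))+V(t;f)-\mathcal L_{(\mathcal A^*-\text{Id})f}(\V(t))$ — note this is the same algebraic identity that, at the operator level, underlies Prop.~\ref{prop:V-dynamics}, so your computation is the scalar shadow of the paper's. For the diffusion you compute the quadratic variation of $M(t)=2\int_0^t\langle Y(s),f\rangle\langle\eta^*f,dW(s)\rangle$ and appeal to L\'evy's characterisation/martingale representation (on an enlarged space where the integrand vanishes), correctly flagging that $\sqrt{V(t;f)}=|\langle Y(t),f\rangle|$ only loses a sign, which is immaterial since the quadratic variations agree; this plays the role the Benth--Kr\"uhner representation theorem plays in the paper. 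What your argument buys is economy and self-containedness: no Fr\'echet derivatives, no infinite-dimensional It\^o formula, no external representation theorem beyond standard scalar martingale theory. What the paper's route buys is that the scalar statement falls out of the operator-valued dynamics of $\V(t)$, which is the object of primary interest there for comparison with the Heston and Wishart models, and the same projection machinery works uniformly for all the functionals considered in that section.
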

\begin{proof}
From Props.~\ref{prop:V_dynamics-Abounded} and \ref{prop:V-dynamics}, we have
\begin{align*}
dV(t;f)&=\left(\mathcal L_f(\mathcal A\V(t)\mathcal A^*)+V(t;f)-
\mathcal L_f((\mathcal A-\text{Id})\V(t)(\mathcal A^*-\text{Id}))+\mathcal L_f(\eta Q_W\eta^*)\right)\,dt \\
&\qquad+\mathcal L_f\left(\Psi(t)\,dW(t)\right).
\end{align*}
First,
$$
\mathcal L_f(\eta Q_W\eta^*)=\<\eta Q_W\eta^* f,f\>=|Q^{1/2}\eta^*f|^2.
$$
Next,
$$
\mathcal L_f(\mathcal A\V(t)\mathcal A^*)=\<\mathcal A\V(t)\mathcal A^*f,f\>
=\<\V(t)\mathcal A^*f,\mathcal A^*f\>=\mathcal L_{\mathcal A^*f}(\V(t)).
$$
This proves the drift term of $\{V(t;f)\}_{t\geq 0}$. 

We finally consider the projection of the stochastic integral.
From Thm.~2.1 in Benth and Kr\"uhner~\cite{BK-HJM}, 
\begin{align*}
 \mathcal{L}_{f}\left(\int_0^t \Psi(s)dW(s)\right) &= \int_0^t \sigma(s;f) dw(s),
\end{align*}
where $\{w(t)\}_{t\geq 0}$ is a real-valued Wiener process and $\sigma(t;f) =|Q^{1/2}_W\gamma(t;f)|$ with $\{\gamma(t;f)\}_{t\geq 0}$ being the 
$H$-valued stochastic process defined by 
$\mathcal{L}_{f}(\Psi(t)(\cdot)) = \<\gamma(t;f),\cdot\>$.
Since
\begin{align*}
 \mathcal{L}_{f}(\Psi(t)(\cdot)) &= \mathcal{L}_{f}(\eta(\cdot)\x Y(t)) + \mathcal{L}_{f}(Y(t)\x\eta(\cdot))\\
 %&= \left<\Psi(s),f^{\x2}\right>_{\mathcal{H}}\\
 %&= \left<\eta(\cdot)\x Y(s) + Y(s)\x\eta(\cdot),f^{\x2}\right>_{\mathcal{H}}\\
 %&= \left<\eta(\cdot)\x Y(s),f^{\x2}\right>_{\mathcal{H}} + \left<Y(s)\x\eta(\cdot),f^{\x2}\right>_{\mathcal{H}}\\
 &= \<\eta(\cdot),f\>\<Y(t),f\>+\<Y(t),f\>\<\eta(\cdot),f\> \\
 %&= 2 \left<\eta(\cdot),f\right>_H \left<Y(s),f\right>_H \\
 &= 2 \<\cdot,\eta^*f\>\<Y(t),f\>\\
 &= \<\cdot,2\<Y(t),f\>\eta^*f\>,
\end{align*}
we have $\gamma(t;f) = 2\<Y(t),f\>\eta^*f$. Observe in passing, recalling
Lemma~\ref{lem:finite-integral-norm},  that 
$\{\gamma(t;f)\}_{t\geq 0}$ is an $\mathcal F_t$-adapted stochastic process such that
$\E[\int_0^t\gamma^2(s;f)\,ds]<\infty$ for any $t>0$, and thus $w$-integrable. 
The integrand $\sigma(t;f)$ is therefore given by
\begin{align*}
 \sigma^2(t;f) &= |Q^{1/2}_W\gamma(t;f)|^2=4\<Y(t),f\>^2|Q^{1/2}_W\eta^*f|^2
=4\mathcal L_f(\V(t))|Q^{1/2}_W\eta^*f|^2.
\end{align*}
Thus, $\sigma(t;f)=2\sqrt{V(t;f)}|Q^{1/2}_W\eta^*f|$ and the proof is complete.
\end{proof}
We see that the process $\{V(t;f)\}_{t\geq 0}$ shares some similarities with a classical
real-valued Heston volatility model (see Heston~\cite{H}). $\{V(t;f)\}_{t\geq 0}$ has a square-root diffusion
term, and a linear drift term. However, there are also some additional drift terms
which are not expressible in $V(t;f)$. 

If $f\in H$ is an eigenvector of $\mathcal A^*$ with an eigenvalue $\lambda\in\R$, we find
that $\mathcal L_{\mathcal A^*f}(\V(t))=\lambda^2V(t;f)$ and 
$\mathcal L_{(\mathcal A^*-\text{Id})f}(\V(t))=(\lambda-1)^2V(t;f)$, and hence
by Prop.~\ref{prop:V-dyn-realline},
$$
dV(t;f)=\left(|Q^{1/2}_W\eta^*f|+2\lambda V(t;f)\right)\,dt+2|Q^{1/2}_W\eta^*f|\sqrt{V(t;f)}\,dw(t),
$$
which corresponds to a classical Heston stochastic variance process.

\end{document}